\newcommand{\norm}[1]{\left\lVert#1\right\rVert}
\numberwithin{equation}{section}
\newtheorem{theorem}{Theorem}[section]
\newtheorem{lemma}[theorem]{Lemma}
\newtheorem{proposition}[theorem]{Proposition}
\theoremstyle{definition}
\theoremstyle{remark}
\begin{document}

\title{An EDG Method for Distributed Optimal Control of Elliptic PDEs}

	\author{Xiao Zhang%
		\thanks{College of Mathematics, Sichuan University,
			Chengdu, China (zhangxiaofem@163.com). X.~Zhang thanks Missouri University of Science and Technology for hosting him as a visiting scholar; some of this work was completed during his research visit.}
		\and
		Yangwen Zhang%
		\thanks{Department of Mathematics and Statistics, Missouri University of Science and Technology, Rolla, MO (\mbox{ywzfg4@mst.edu}, singlerj@mst.edu). Y.~Zhang and J.~Singler were supported in part by National Science Foundation grant DMS-1217122.  Y.~Zhang and J.~Singler thank the IMA for funding research visits, during which some of this work was completed.}
		\and
		John~R.~Singler%
		\footnotemark[2]
}

\date{}

\maketitle

\begin{abstract}
We consider a distributed optimal control problem governed by an  elliptic PDE, and propose an embedded discontinuous Galerkin (EDG) method to approximate the solution. We derive optimal a priori error estimates for the state, dual state, the optimal control, and suboptimal estimates for the fluxes. We present numerical experiments to confirm our theoretical results. 
\end{abstract}

\section{Introduction}
\label{intro}

We consider approximating the solution of the following distributed control problem. Let $\Omega\subset \mathbb{R}^{d} $ $ (d\geq 2)$ be a Lipschitz polyhedral domain  with boundary $\Gamma = \partial \Omega$.  The goal is to minimize
\begin{align}
J(u)=\frac{1}{2}\| y- y_{d}\|^2_{L^{2}(\Omega)}+\frac{\gamma}{2}\|u\|^2_{L^{2}(\Omega)}, \quad \gamma>0, \label{cost1}
\end{align}
subject to
\begin{equation}\label{Ori_problem}
\begin{split}
-\Delta y&=f+u ~\quad\text{in}~\Omega,\\
y&=g\qquad\quad\text{on}~\partial\Omega,
\end{split}
\end{equation}
It is well known that the optimal control problem \eqref{cost1}-\eqref{Ori_problem} is equivalent to the optimality system
\begin{subequations}\label{eq_adeq}
	\begin{align}
	-\Delta y &=f+u\quad~\text{in}~\Omega,\label{eq_adeq_a}\\
	y&=g\qquad~~~~\text{on}~\partial\Omega,\label{eq_adeq_b}\\
	-\Delta z &=y_d-y\quad~\text{in}~\Omega,\label{eq_adeq_c}\\
	z&=0\qquad\quad~~\text{on}~\partial\Omega,\label{eq_adeq_d}\\
	z-\gamma  u&=0\qquad\quad~~\text{in}~\Omega.\label{eq_adeq_e}
	\end{align}
\end{subequations}

Different numerical methods for optimal control problems governed by partial differential equations have been extensively studied by many researchers.  Numerical methods that have been investigated for this kind of problem include approaches based on standard finite element methods \cite{MR2470142,MR2486088,MR3473693,MR1686151,MR2493560}, mixed finite elements \cite{MR2585589,MR3679859,MR3427830,MR3103238,MR2998296,MR2576747,MR2398768}, and discontinuous Galerkin (DG) methods \cite{MR3022208,MR2644299}.

Recently, hybridizable discontinuous Galerkin (HDG) methods have been developed for many partial differential equations; see, e.g., \cite{MR2485455,MR2772094,MR2513831,MR2558780,MR2796169,MR3626531,MR3522968,MR3463051,MR3452794,MR3343926}.  HDG methods keep the advantages of DG methods and mixed methods, while also having less globally coupled unknowns.  HDG methods have now also been applied to many different  optimal control problems \cite{MR3508834,HuShenSinglerZhangZheng_HDG_Dirichlet_control1,HuShenSinglerZhangZheng_HDG_Dirichlet_control2,HuShenSinglerZhangZheng_HDG_Dirichlet_control3}.

The embedded discontinuous Galerkin (EDG) methods, originally proposed in \cite{MR2317378}, are obtained from HDG methods by replacing the discontinuous finite element space for the numerical traces with a continuous space.  The number of degrees of freedoms for the EDG method are much smaller than the HDG method.  This gain in computational efficiency can come with a loss: for the Poisson equation, convergence rates for the EDG method are one order lower than the HDG method \cite{MR2551142}.  However, for problems with strong convection the enhanced convergence properties of HDG methods are reduced \cite{FuQiuZhang15}.  Therefore, EDG methods are competitive for such problems, and researchers have recently begun to thoroughly investigate EDG methods for various partial differential equations \cite{peraire2011embedded,MR3404541,fernandez2016,MR3528316,fu2017analysis}.

Our long term goal is to devise efficient and accurate methods for complicated optimal flow control problems.  EDG methods have potential for such problems; therefore, as a first step, we consider an EDG method to approximate the solution of the above optimal control problem for the Poisson equation.  We use an EDG method with polynomials of degree $k$ to approximate all the variables of the optimality system \eqref{eq_adeq}, i.e., the state $y$, dual state $z$, the numerical traces, and the fluxes $\bm q = -\nabla  y $ and $ \bm p = -\nabla z$.  We describe the method in \Cref{sec:EDG}, and in \Cref{sec:analysis} we obtain the error estimates
\begin{align*}
&\norm{y-{y}_h}_{0,\Omega} = O( h^{k+1}),  &  &\norm{z-{z}_h}_{0,\Omega} = O( h^{k+1}),\\
&\norm{\bm{q}-\bm{q}_h}_{0,\Omega} = O( h^{k}), &  &\norm{\bm{p}-\bm{p}_h}_{0,\Omega} = O( h^{k}),
\end{align*}
and
\begin{align*}
&\norm{u-{u}_h}_{0,\Omega} = O( h^{k+1}).
\end{align*}
We present numerical results in \Cref{sec:numerics}, and then briefly discuss future work.

\section{EDG scheme for the optimal control problem}
\label{sec:EDG}

\subsection{Notation}Throughout the paper we adopt the standard notation $W^{m,p}(\Omega)$ for Sobolev spaces on $\Omega$ with norm $\|\cdot\|_{m,p,\Omega}$ and seminorm $|\cdot|_{m,p,\Omega}$. We denote $W^{m,2}(\Omega)$ by $H^{m}(\Omega)$ with norm $\|\cdot\|_{m,\Omega}$ and seminorm $|\cdot|_{m,\Omega}$, and also $H_0^1(\Omega)=\{v\in H^1(\Omega):v=0 \;\mbox{on}\; \partial \Omega\}$.  We denote the $L^2$-inner products on $L^2(\Omega)$ and $L^2(\Gamma)$ by
\begin{align*}
(v,w) &= \int_{\Omega} vw  \quad \forall v,w\in L^2(\Omega),\\
\left\langle v,w\right\rangle &= \int_{\Gamma} vw  \quad\forall v,w\in L^2(\Gamma).
\end{align*}
Furthermore, $ H(\text{div},\Omega) = \{\bm{v}\in [L^2(\Omega)]^d, \nabla\cdot \bm{v}\in L^2(\Omega)\} $.

Let $\mathcal{T}_h$ be a collection of disjoint elements that partition $\Omega$.  We denote by $\partial \mathcal{T}_h$ the set $\{\partial K: K\in \mathcal{T}_h\}$. For an element $K$ of the collection  $\mathcal{T}_h$, let $e = \partial K \cap \Gamma$ denote the boundary face of $ K $ if the $d-1$ Lebesgue measure of $e$ is non-zero. For two elements $K^+$ and $K^-$ of the collection $\mathcal{T}_h$, let $e = \partial K^+ \cap \partial K^-$ denote the interior face between $K^+$ and $K^-$ if the $d-1$ Lebesgue measure of $e$ is non-zero. Let $\varepsilon_h^o$ and $\varepsilon_h^{\partial}$ denote the set of interior and boundary faces, respectively. We denote by $\varepsilon_h$ the union of  $\varepsilon_h^o$ and $\varepsilon_h^{\partial}$. We finally introduce
\begin{align*}
(w,v)_{\mathcal{T}_h} = \sum_{K\in\mathcal{T}_h} (w,v)_K,   \quad\quad\quad\quad\left\langle \zeta,\rho\right\rangle_{\partial\mathcal{T}_h} = \sum_{K\in\mathcal{T}_h} \left\langle \zeta,\rho\right\rangle_{\partial K}.
\end{align*}

Let $\mathcal{P}^k(D)$ denote the set of polynomials of degree at most $k$ on a domain $D$.  We introduce the discontinuous finite element spaces
\begin{align}
\bm{V}_h  &:= \{\bm{v}\in [L^2(\Omega)]^d: \bm{v}|_{K}\in [\mathcal{P}^k(K)]^d, \forall K\in \mathcal{T}_h\},\\
{W}_h  &:= \{{w}\in L^2(\Omega): {w}|_{K}\in \mathcal{P}^{k}(K), \forall K\in \mathcal{T}_h\},\\
{M}_h  &:= \{{\mu}\in L^2(\varepsilon_h): {\mu}|_{e}\in \mathcal{P}^k(e), \forall e\in \varepsilon_h\}.
\end{align}
Let  $M_h(o)$ and $M_h(\partial)$  denote the spaces defined in the same way as $M_h$, but with $ \varepsilon_h $ replaced by $ \varepsilon_h^o$ and $ \varepsilon_h^{\partial}$, respectively.  Spatial derivatives of functions in these discontinuous finite element spaces are understood to be taken piecewise on each element $K\in \mathcal T_h$.

For EDG methods, we replace the discontinuous finite element space $M_h$ for the numerical traces with the continuous finite element space $\widetilde{M}_h$ defined by
\begin{equation}
\widetilde{M}_h:=M_h \cap \mathcal{C}^0 (\varepsilon_h).
\end{equation}
The spaces $\widetilde{M}_h(o)$ and $\widetilde{M}_h(\partial)$ are defined in the same way as $M_h(o)$ and $M_h(\partial)$.

\subsection{The EDG Formulation}

The mixed weak form of the optimality system \eqref{eq_adeq_a}-\eqref{eq_adeq_e} is given by
\begin{subequations}\label{mixed}
	\begin{align}
	(\bm q,\bm r_1)-( y,\nabla\cdot \bm r_1)+\langle y,\bm r_1\cdot \bm n\rangle&=0,\label{mixed_a}\\
	(\nabla\cdot\bm q,  w_1)&= ( f+ u, w_1),  \label{mixed_b}\\
	(\bm p,\bm r_2)-(z,\nabla \cdot\bm r_2)+\langle z,\bm r_2\cdot\bm n\rangle&=0,\label{mixed_c}\\
	(\nabla\cdot\bm p,  w_2)&= (y_d- y, w_2),  \label{mixed_d}\\
	( z-\gamma u,v)&=0,\label{mixed_e}
	\end{align}
\end{subequations}
for all $(\bm r_1, w_1,\bm r_2, w_2,v)\in H(\text{div},\Omega)\times L^2(\Omega)\times H(\text{div},\Omega)\times L^2(\Omega)\times L^2(\Omega)$.  Note that the optimality condition \eqref{mixed_e} gives $ u = \gamma^{-1} z $.

The EDG method seeks approximate fluxes ${\bm{q}}_h,{\bm{p}}_h \in \bm{V}_h $, states $ y_h, z_h \in W_h $, interior element boundary traces $ \widehat{y}_h^o,\widehat{z}_h^o \in \widetilde{M}_h(o) $, and  control $ u_h \in W_h$ satisfying
\begin{subequations}\label{HDG_discrete2}
	\begin{align}
	(\bm q_h,\bm r_1)_{\mathcal T_h}-( y_h,\nabla\cdot\bm r_1)_{\mathcal T_h}+\langle \widehat y_h^o,\bm r_1\cdot\bm n\rangle_{\partial\mathcal T_h\backslash \varepsilon_h^\partial}&=-\langle  I_hg,\bm r_1\cdot\bm n\rangle_{\varepsilon_h^\partial}, \label{HDG_discrete2_a}\\
	-(\bm q_h,  \nabla w_1)_{\mathcal T_h} +\langle\widehat {\bm q}_h\cdot\bm n,w_1\rangle_{\partial\mathcal T_h} 
	  -  ( u_h, w_1)_{\mathcal T_h} &=  ( f, w_1)_{\mathcal T_h},   \label{HDG_discrete2_b}
	%
	\end{align}
	for all $(\bm{r}_1, w_1)\in \bm{V}_h\times W_h$, where $I_h g$ is a continuous interpolation of $g$ on $\varepsilon_h^\partial$, 
	\begin{align}
	(\bm p_h,\bm r_2)_{\mathcal T_h}-(z_h,\nabla\cdot\bm r_2)_{\mathcal T_h}+\langle \widehat z_h^o,\bm r_2\cdot\bm n\rangle_{\partial\mathcal T_h\backslash\varepsilon_h^\partial}&=0,\label{HDG_discrete2_c}\\
	-(\bm p_h, \nabla w_2)_{\mathcal T_h}+\langle\widehat{\bm p}_h\cdot\bm n,w_2\rangle_{\partial\mathcal T_h} +  ( y_h, w_2)_{\mathcal T_h}&= (y_d, w_2)_{\mathcal T_h},  \label{HDG_discrete2_d}
	\end{align}
	for all $(\bm{r}_2, w_2)\in \bm{V}_h\times W_h$.
	\begin{align}
	\langle\widehat {\bm q}_h\cdot\bm n,\mu_1\rangle_{\partial\mathcal T_h\backslash\varepsilon^{\partial}_h}&=0\label{HDG_discrete2_e},\\
	\langle\widehat{\bm p}_h\cdot\bm n,\mu_2\rangle_{\partial\mathcal T_h\backslash\varepsilon^{\partial}_h}&=0,\label{HDG_discrete2_f}
	\end{align}
	for all $\mu_1,\mu_2\in \widetilde{M}_h(o)$, and the optimality condition 
	\begin{align}
	(z_h-\gamma u_h, w_3)_{\mathcal T_h} &= 0\label{HDG_discrete2_g},
	\end{align}
	for all $ w_3\in W_h$.  The EDG discrete optimality condition \eqref{HDG_discrete2_g} gives $ u_h = \gamma^{-1} z_h $.  The numerical traces on $\partial\mathcal{T}_h$ are defined by 
	\begin{align}
	\widehat{\bm{q}}_h\cdot \bm n &=\bm q_h\cdot\bm n+h^{-1} (y_h-\widehat y_h^o)   \quad \mbox{on} \; \partial \mathcal{T}_h\backslash\varepsilon_h^\partial, \label{HDG_discrete2_h}\\
	\widehat{\bm{q}}_h\cdot \bm n &=\bm q_h\cdot\bm n+h^{-1}  (y_h-I_hg)  ~ \   \mbox{on}\;  \varepsilon_h^\partial, \label{HDG_discrete2_i}\\
	\widehat{\bm{p}}_h\cdot \bm n &=\bm p_h\cdot\bm n+h^{-1} (z_h-\widehat z_h^o)\quad \mbox{on} \; \partial \mathcal{T}_h\backslash\varepsilon_h^\partial,\label{HDG_discrete2_j}\\
	\widehat{\bm{p}}_h\cdot \bm n &=\bm p_h\cdot\bm n+h^{-1}  z_h\quad\quad\quad\quad\mbox{on}\;  \varepsilon_h^\partial.\label{HDG_discrete2_k}
	\end{align}
\end{subequations}
Our implementation of the above EDG method and the local solver is similar to the implementation of an HDG scheme for a similar problem described in detail in \cite{HuShenSinglerZhangZheng_HDG_Dirichlet_control2}.

\section{Error Analysis}
\label{sec:analysis}

Next, we provide a convergence analysis of the above EDG method for the optimal control problem.  Throughout this section, we assume $ \Omega $ is a bounded convex polyhedral domain, the problem data satisfies $ f \in L^2(\Omega) $ and $ g \in \mathcal{C}^0(\partial \Omega) $, $ h \leq 1 $, and the solution of the optimality system \eqref{eq_adeq} is sufficiently smooth.

Below, we prove our main convergence result:
\begin{theorem}\label{main_res}
	We have
	\begin{align*}
	\|\bm q-\bm q_h\|_{\mathcal T_h}&\lesssim h^{k}(|\bm q|_{k+1}+|y|_{k+1}+|\bm p|_{k+1}+|z|_{k+1}),\\
	\|\bm p-\bm p_h\|_{\mathcal T_h}&\lesssim h^{k}(|\bm q|_{k+1}+|y|_{k+1}+|\bm p|_{k+1}+|z|_{k+1}),\\
	\|y-y_h\|_{\mathcal T_h}&\lesssim h^{k+1}(|\bm q|_{k+1}+|y|_{k+1}+|\bm p|_{k+1}+|z|_{k+1}),\\
	\|z-z_h\|_{\mathcal T_h}&\lesssim h^{k+1}(|\bm q|_{k+1}+|y|_{k+1}+|\bm p|_{k+1}+|z|_{k+1}),\\
	\|u-u_h\|_{\mathcal T_h}&\lesssim h^{k+1}(|\bm q|_{k+1}+|y|_{k+1}+|\bm p|_{k+1}+|z|_{k+1}).
	\end{align*}
\end{theorem}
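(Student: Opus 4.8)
The plan is to run the standard projection-based HDG/EDG error analysis, adapted to the coupled optimality system, in four stages.

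\textbf{Projections and error splitting.} First I would introduce the $L^2$-orthogonal projections $\Pi_{\bm V}$ onto $\bm V_h$ and $\Pi_W$ onto $W_h$, together with a projection $P_M$ onto the continuous trace space $\widetilde M_h$, and record the approximation estimates $\|\bm q-\Pi_{\bm V}\bm q\|\lesssim h^{k+1}|\bm q|_{k+1}$, $\|y-\Pi_W y\|\lesssim h^{k+1}|y|_{k+1}$, and analogously for $\bm p,z$. Each error is split as $\bm q-\bm q_h=(\bm q-\Pi_{\bm V}\bm q)+(\Pi_{\bm V}\bm q-\bm q_h)=:\bm\varepsilon^q+\bm e_h^q$, and likewise for the other variables, so that only the discrete parts $\bm e_h^q,e_h^y,\dots$ need to be estimated. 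Unlike the HDG case, the continuous trace space $\widetilde M_h$ prevents the use of an HDG projection with the usual commuting/orthogonality property; this is what ultimately costs one order in the flux estimates.

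\textbf{Error equations.} Using consistency of the scheme (the exact solution satisfies \eqref{HDG_discrete2_a}--\eqref{HDG_discrete2_g} with the numerical traces replaced by the exact traces), I would subtract the discrete equations from the mixed weak form \eqref{mixed} tested against discrete functions, obtaining the error equations for $(\bm e_h^q,e_h^y,\widehat e_h^y)$ and $(\bm e_h^p,e_h^z,\widehat e_h^z)$ with the projection errors and the control error $e^u=\gamma^{-1}e^z$ appearing as data on the right. Substituting the numerical trace definitions \eqref{HDG_discrete2_h}--\eqref{HDG_discrete2_k} exposes the stabilization terms $h^{-1}(e_h^y-\widehat e_h^y)$ and $h^{-1}(e_h^z-\widehat e_h^z)$.

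\textbf{Energy identity and the adjoint coupling.} Taking the discrete errors themselves as test functions yields energy identities bounding $\|\bm e_h^q\|$, $\|\bm e_h^p\|$ and the stabilization seminorms by projection errors together with the $L^2$ state/dual errors. The crucial step is then to test the state error equations with the \emph{dual} discrete errors and the dual error equations with the \emph{state} discrete errors; exploiting the symmetry of the EDG discretization of the Laplacian and the relation $e^u=\gamma^{-1}e^z$, the indefinite cross terms cancel and one is left, schematically, with the coercive identity $\gamma^{-1}\|e_h^z\|^2+\|e_h^y\|^2=(\text{projection-error data})$, where the $L^2$-orthogonality of $\Pi_W$ has been used to turn $(e^z,e_h^z)$ and $(e^y,e_h^y)$ into $\|e_h^z\|^2$ and $\|e_h^y\|^2$. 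This sign-definiteness is what closes the coupling without any smallness assumption on $\gamma$ or a fixed-point argument. I expect this to be the main obstacle: pinning down the signs in the optimality system and the precise test-function pairing so that the state--dual coupling produces a nonnegative combination rather than an indefinite one.

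\textbf{Duality and conclusion.} Combining the energy identities with the coercive identity and Young's inequality to absorb the discrete errors gives the $O(h^k)$ bounds for $\|\bm e_h^q\|,\|\bm e_h^p\|$, hence by the triangle inequality the stated estimates for $\|\bm q-\bm q_h\|$ and $\|\bm p-\bm p_h\|$. To upgrade the state and dual to the optimal $O(h^{k+1})$, I would run an Aubin--Nitsche duality argument: introduce the adjoint Poisson problems with $e^y$ and $e^z$ as data, invoke $H^2$ elliptic regularity on the convex domain $\Omega$, and pair their EDG approximations against the error equations, the Galerkin orthogonality of the discrete errors producing the extra power of $h$. The control estimate is then immediate, since $u-u_h=\gamma^{-1}(z-z_h)$ by \eqref{mixed_e} and \eqref{HDG_discrete2_g} gives $\|u-u_h\|=\gamma^{-1}\|z-z_h\|$. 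A persistent secondary difficulty is estimating the residual boundary terms created by the continuous trace space $\widetilde M_h$, which must be handled directly because the convenient HDG projection identities are unavailable here.
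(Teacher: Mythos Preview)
Your projection splitting, error equations, and the energy-plus-cross-testing stage are all correct and do yield $O(h^k)$ bounds on the fluxes, the stabilization seminorms, and on $\|e_h^y\|,\|e_h^z\|$. The gap is in the upgrade step. When you run Aubin--Nitsche on the \emph{coupled} state error equation, the right-hand side carries the extra term $(u-u_h,-\Pi_W\Psi)=-\gamma^{-1}(e_h^z,\Pi_W\Psi)$, and the regularity bound $\|\Psi\|_{2}\lesssim\|e_h^y\|$ only gives $|(e_h^z,\Pi_W\Psi)|\lesssim\|e_h^z\|\,\|e_h^y\|$. Hence $\|e_h^y\|\lesssim h^{k+1}+C\gamma^{-1}\|e_h^z\|$; the symmetric duality on the adjoint equation gives $\|e_h^z\|\lesssim h^{k+1}+C\|e_h^y\|$. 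Substituting one into the other yields $(1-C^2\gamma^{-1})\|e_h^y\|\lesssim h^{k+1}$, which closes only under a largeness assumption on $\gamma$, precisely the restriction you said the sign-definiteness avoids. Your cross-testing identity does not rescue this: its right-hand side is the projection consistency functional tested against $e_h^{\bm p}$, $e_h^{\bm q}$ and the jump seminorms, all of which are only $O(h^k)$, so the identity by itself cannot do better than $\|e_h^y\|+\|e_h^z\|\lesssim h^k$.

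The paper breaks this circularity by inserting an \emph{auxiliary} EDG problem in which the exact optimal control $u$ is frozen (so $u_h$ is replaced by $u$ in \eqref{HDG_discrete2_b}). The state half of that auxiliary system has no coupling term whatsoever, so energy gives $\|\bm\Pi_V\bm q-\bm q_h(u)\|\lesssim h^k$ and a clean Aubin--Nitsche argument gives $\|\Pi_W y-y_h(u)\|\lesssim h^{k+1}$. The dual half then has source $y_h(u)-y$, already of size $h^{k+1}$, so its duality also delivers $\|\Pi_W z-z_h(u)\|\lesssim h^{k+1}$. Only now is the cross-testing trick applied, to the differences $\zeta_y=y_h(u)-y_h$ and $\zeta_z=z_h(u)-z_h$: since their error equations carry no projection residuals at all, the identity collapses to
\[
\gamma\|u-u_h\|^2+\|\zeta_y\|^2=-(z_h(u)-z,\,u-u_h),
\]
whose right-hand side is already $O(h^{k+1})\|u-u_h\|$. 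This closes for every $\gamma>0$, and triangle inequalities finish the proof. The missing idea in your plan is this intermediate decoupling; without it the two separate Poisson dualities feed back into each other.
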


\subsection{Preliminary material}
\label{sec:Projectionoperator}

The convergence analysis of the EDG method for the Poisson problem without control has been performed in \cite{MR2551142}.  The authors of \cite{MR2551142} use a special projection to split the errors are prove the convergence.  We do not use the special projection from \cite{MR2551142} in our analysis; instead, we use the standard $L^2$-orthogonal projection operators $\bm{\Pi}_V$ and $\Pi_W$ satisfying
\begin{subequations} \label{def_L2}
	\begin{align}
			(\bm \Pi_V \bm q, \bm r)_{K}&=(\bm q,\bm r)_{K} \quad \forall \bm r\in \bm{\mathcal{ P}}_k(K),\\
	(\Pi_W y,w)_{K}&=(y,w)_{K}\quad \forall w\in \mathcal{P}_{k}(K).
	\end{align}
\end{subequations}
In the conclusion, we briefly mention future work connected to the different EDG analysis approach taken here.

We use the following well-known bounds:
\begin{subequations}\label{classical_ine}
	\begin{align}
	\norm {\bm q -\bm\Pi_V \bm q}_{\mathcal T_h} &\le  Ch^{k+1} \norm{\bm q}_{k+1,\Omega}, \ \, \norm {y -{\Pi_W y}}_{\mathcal T_h} \le C h^{k+1} \norm{y}_{k+1,\Omega},\\
	\norm {y -{\Pi_W y}}_{\partial\mathcal T_h} &\le C  h^{k+\frac 1 2} \norm{y}_{k+1,\Omega},
	 \  \norm {\bm q -\bm\Pi_V \bm q}_{\partial \mathcal T_h} \le C h^{k+\frac 12} \norm{\bm q}_{k+1,\Omega},\\
		\norm {y -{ I_h y}}_{\partial\mathcal T_h} &\le C  h^{k+\frac 1 2} \norm{y}_{k+1,\Omega}, \  \norm {w}_{\partial \mathcal T_h} \le C h^{-\frac 12} \norm {w}_{ \mathcal T_h}, \forall w\in W_h.
	\end{align}
\end{subequations}
where  $I_h $ is a continuous interpolation operator, and we have the same projection error bounds for $\bm p$ and $z$.

Next, define the EDG operator $ \mathscr B$ by
\begin{equation}\label{def_B1}
\begin{split}
\hspace{1em}&\hspace{-1em}  \mathscr  B( \bm v_h,w_h,\mu_h;\bm r_1,w_1,\mu_1) \\
&=(\bm q_h,\bm r_1)_{\mathcal T_h}-( y_h,\nabla\cdot\bm r_1)_{\mathcal T_h}+\langle \widehat y_h^o,\bm r_1\cdot\bm n\rangle_{\partial\mathcal T_h\backslash \varepsilon_h^\partial}-(\bm q_h,  \nabla w_1)_{\mathcal T_h} \\
&\quad +\langle {\bm q}_h\cdot\bm n +h^{-1} y_h,w_1\rangle_{\partial\mathcal T_h}-\langle h^{-1}\widehat{y}_h^o,w_1 \rangle_{\partial \mathcal{T}_h\backslash \varepsilon_h^\partial}\\
&\quad-\langle  {\bm q}_h\cdot\bm n+h^{-1}(y_h-\widehat y_h^o),\mu_1\rangle_{\partial\mathcal T_h\backslash\varepsilon^{\partial}_h}.
\end{split}
\end{equation}
By the definition in \eqref{def_B1},  we can rewrite the EDG formulation of the optimality system \eqref{HDG_discrete2} as follows: find $({\bm{q}}_h,{\bm{p}}_h,y_h,z_h,u_h,\widehat y_h^o,\widehat z_h^o)\in \bm{V}_h\times\bm{V}_h\times W_h \times W_h\times W_h\times \widetilde{M}_h(o)\times \widetilde{M}_h(o)$  such that
\begin{subequations}\label{EDG_full_discrete}
	\begin{align}
	\mathscr B(\bm q_h,y_h,\widehat y_h^o;\bm r_1,w_1,\mu_1)&=( f+ u_h, w_1)_{\mathcal T_h} + \langle I_hg, h^{-1} w_1-\bm r_1\cdot\bm n \rangle_{\varepsilon_h^\partial},\label{EDG_full_discrete_a}\\
	\mathscr B(\bm p_h,z_h,\widehat z_h^o;\bm r_2,w_2,\mu_2)&=(y_d-y_h,w_2)_{\mathcal T_h},\label{EDG_full_discrete_b}\\
	(z_h-\gamma u_h,w_3)_{\mathcal T_h}&= 0,\label{EDG_full_discrete_e}
	\end{align}
\end{subequations}
for all $\left(\bm{r}_1, \bm{r}_2,w_1,w_2,w_3,\mu_1,\mu_2\right)\in \bm{V}_h\times\bm{V}_h\times W_h \times W_h\times W_h\times \widetilde{M}_h(o)\times \widetilde{M}_h(o)$.

Below, we present two fundamental properties of the operator $\mathscr B$, and show the EDG discretization of the optimality system \eqref{EDG_full_discrete} has a unique solution.  The strategy of the proofs of these three results is similar to our earlier HDG work \cite{HuShenSinglerZhangZheng_HDG_Dirichlet_control2}; we include the proofs to make this paper self-contained.
\begin{lemma}\label{property_B}
	For any $ ( \bm v_h, w_h, \mu_h ) \in \bm V_h \times W_h \times M_h(o) $, we have
	\begin{align*}
	\hspace{1em}&\hspace{-1em} \mathscr B(\bm v_h,w_h,\mu_h;\bm v_h,w_h,\mu_h)\\
	&=(\bm v_h,\bm v_h)_{\mathcal T_h}+ \langle h^{-1} (w_h-\mu_h),w_h-\mu_h\rangle_{\partial\mathcal T_h\backslash \varepsilon_h^\partial}+\langle h^{-1} w_h,w_h\rangle_{\varepsilon_h^\partial}.
	\end{align*}
\end{lemma}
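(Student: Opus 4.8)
The plan is to prove this coercivity-type identity by direct computation, substituting the diagonal arguments $(\bm v_h, w_h, \mu_h)$ into both slots of $\mathscr B$ and simplifying. Since this is an exact algebraic identity rather than an inequality, every term must be accounted for; the strategy is to convert the two volume terms into face integrals via integration by parts, then collect the resulting boundary terms and recognize that they reorganize into a perfect square plus the boundary-face contribution, on top of the volume $L^2$ term.

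First I would write out $\mathscr B(\bm v_h, w_h, \mu_h; \bm v_h, w_h, \mu_h)$ term by term from the definition \eqref{def_B1}, replacing $\bm q_h, y_h, \widehat y_h^o$ and $\bm r_1, w_1, \mu_1$ all by the single triple $\bm v_h, w_h, \mu_h$. The two volume terms $-(w_h, \nabla\cdot\bm v_h)_{\mathcal T_h}$ and $-(\bm v_h, \nabla w_h)_{\mathcal T_h}$ combine, after element-wise integration by parts, into the single boundary term $-\langle w_h, \bm v_h\cdot\bm n\rangle_{\partial\mathcal T_h}$, while the diagonal term $(\bm v_h,\bm v_h)_{\mathcal T_h}$ is already in final form.

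Next I would separate the remaining boundary integrals into those carrying the factor $\bm v_h\cdot\bm n$ with no $h^{-1}$ weight and those carrying the factor $h^{-1}$. The flux-type terms cancel in pairs: the contribution $\langle\mu_h,\bm v_h\cdot\bm n\rangle_{\partial\mathcal T_h\backslash\varepsilon_h^\partial}$ cancels against the $\mu_1$-slot term $-\langle\bm v_h\cdot\bm n,\mu_h\rangle_{\partial\mathcal T_h\backslash\varepsilon_h^\partial}$, while the $w_1$-slot term $\langle\bm v_h\cdot\bm n, w_h\rangle_{\partial\mathcal T_h}$ cancels the boundary term produced by the integration by parts. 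After these cancellations only the $h^{-1}$-weighted integrals survive.

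Finally I would split $\langle h^{-1} w_h, w_h\rangle_{\partial\mathcal T_h}$ into its interior part over $\partial\mathcal T_h\backslash\varepsilon_h^\partial$ and its boundary part over $\varepsilon_h^\partial$. On the interior faces, combining $\langle h^{-1} w_h, w_h\rangle$, $-\langle h^{-1}\mu_h, w_h\rangle$, and $-\langle h^{-1}(w_h-\mu_h),\mu_h\rangle$ and completing the square yields exactly $\langle h^{-1}(w_h-\mu_h), w_h-\mu_h\rangle_{\partial\mathcal T_h\backslash\varepsilon_h^\partial}$; the boundary part $\langle h^{-1} w_h, w_h\rangle_{\varepsilon_h^\partial}$ stands alone. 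Together with $(\bm v_h,\bm v_h)_{\mathcal T_h}$ this gives the stated identity. I expect the only genuine difficulty to be careful bookkeeping of which face set each integral lives on—in particular ensuring the integration-by-parts term is taken over all of $\partial\mathcal T_h$ while the numerical-trace terms live only on $\partial\mathcal T_h\backslash\varepsilon_h^\partial$—since both the pairwise cancellations and the square completion hinge on matching these sets correctly.
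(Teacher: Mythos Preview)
Your proposal is correct and follows essentially the same approach as the paper's proof: write out the definition on the diagonal, use element-wise integration by parts so that the $\bm v_h\cdot\bm n$ contributions cancel in pairs, and then complete the square in the remaining $h^{-1}$-weighted face terms after splitting off the boundary faces. Your write-up is in fact a bit more explicit about the integration-by-parts step and the face-set bookkeeping than the paper's terse computation, but the argument is identical.
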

\begin{proof}  Compute:
	\begin{align*}
	\hspace{1em}&\hspace{-1em} 	\mathscr B(\bm v_h,w_h,\mu_h;\bm v_h,w_h,\mu_h)\\
	&=(\bm v_h,\bm v_h)_{\mathcal T_h}-( w_h,\nabla\cdot\bm v_h)_{\mathcal T_h}+\langle \mu_h,\bm v_h\cdot\bm n\rangle_{\partial\mathcal T_h\backslash \varepsilon_h^\partial}-(\bm v_h,  \nabla w_h)_{\mathcal T_h}\\
	& \quad  +\langle {\bm v}_h\cdot\bm n +h^{-1}  w_h,w_h\rangle_{\partial\mathcal T_h}-\langle h^{-1} \mu_h,w_h\rangle_{\partial\mathcal T_h\backslash \varepsilon_h^\partial} \\
	& \quad-\langle  {\bm v}_h\cdot\bm n+ h^{-1} (w_h - \mu_h),\mu_h \rangle_{\partial\mathcal T_h\backslash\varepsilon^{\partial}_h}\\
	&=(\bm v_h,\bm v_h)_{\mathcal T_h}+\langle  h^{-1}  w_h,w_h\rangle_{\partial\mathcal T_h}-\langle h^{-1}  \mu_h,w_h\rangle_{\partial\mathcal T_h\backslash \varepsilon_h^\partial}\\
	&\quad -\langle h^{-1} (w_h-\mu_h),\mu_h \rangle_{\partial\mathcal T_h\backslash \varepsilon_h^\partial}\\
	&=(\bm v_h,\bm v_h)_{\mathcal T_h}+ \langle h^{-1}  (w_h-\mu_h),w_h-\mu_h\rangle_{\partial\mathcal T_h\backslash \varepsilon_h^\partial}+\langle h^{-1}  w_h,w_h\rangle_{\varepsilon_h^\partial}.
	\end{align*}
\end{proof}

\begin{lemma}\label{identical_equa}
	We have
	$$\mathscr B (\bm q_h,y_h,\widehat y_h^o;\bm p_h,-z_h,-\widehat z_h^o) + \mathscr B (\bm p_h,z_h,\widehat z_h^o;-\bm q_h,y_h,\widehat y_h^o) = 0.$$
\end{lemma}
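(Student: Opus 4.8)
The plan is to expand both copies of $\mathscr B$ directly from the definition~\eqref{def_B1}, being careful that in the first term the test arguments $(\bm p_h,-z_h,-\widehat z_h^o)$ flip the sign of every piece paired against $-z_h$ or $-\widehat z_h^o$, and in the second term the argument $-\bm q_h$ flips the sign of every piece paired against it. After writing both out as explicit sums of volume and face integrals, the entire claim reduces to showing that all resulting terms cancel. I would first dispatch the mass terms: the first $\mathscr B$ contributes $(\bm q_h,\bm p_h)_{\mathcal T_h}$ and the second contributes $-(\bm p_h,\bm q_h)_{\mathcal T_h}$, which cancel by symmetry of the inner product.

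Next I would treat the remaining volume terms by elementwise integration by parts. Summing the divergence theorem over $\mathcal T_h$ gives $(\bm q_h,\nabla z_h)_{\mathcal T_h}+(z_h,\nabla\cdot\bm q_h)_{\mathcal T_h}=\langle \bm q_h\cdot\bm n, z_h\rangle_{\partial\mathcal T_h}$ and, likewise, $-(y_h,\nabla\cdot\bm p_h)_{\mathcal T_h}-(\bm p_h,\nabla y_h)_{\mathcal T_h}=-\langle \bm p_h\cdot\bm n, y_h\rangle_{\partial\mathcal T_h}$. This converts every volume contribution into face integrals over the full skeleton $\partial\mathcal T_h$, so that from this point on the identity is a pure bookkeeping exercise on faces. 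The newly produced $\langle \bm q_h\cdot\bm n, z_h\rangle_{\partial\mathcal T_h}$ cancels the flux–state piece $-\langle \bm q_h\cdot\bm n, z_h\rangle_{\partial\mathcal T_h}$ already sitting inside the first $\mathscr B$, and symmetrically $-\langle \bm p_h\cdot\bm n, y_h\rangle_{\partial\mathcal T_h}$ cancels its counterpart in the second; what survives on the full skeleton is the pair $-\langle h^{-1}y_h,z_h\rangle_{\partial\mathcal T_h}+\langle h^{-1}z_h,y_h\rangle_{\partial\mathcal T_h}$, which vanishes by symmetry and, importantly, does so over all of $\partial\mathcal T_h$ including $\varepsilon_h^\partial$, so no boundary-face obstruction arises.

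All that remains are the terms living only on the interior faces $\partial\mathcal T_h\backslash\varepsilon_h^\partial$, namely the $\langle \mu,\bm r\cdot\bm n\rangle$, the $-\langle h^{-1}\mu,s\rangle$, and the final $-\langle \bm v\cdot\bm n+h^{-1}(w-\mu),\nu\rangle$ pieces from each $\mathscr B$. I would expand the last piece of each form and check that these group into matching pairs: the two numerical-trace flux terms $\langle \bm q_h\cdot\bm n,\widehat z_h^o\rangle$ and $-\langle \widehat z_h^o,\bm q_h\cdot\bm n\rangle$ cancel, and likewise $\langle \widehat y_h^o,\bm p_h\cdot\bm n\rangle$ against $-\langle \bm p_h\cdot\bm n,\widehat y_h^o\rangle$; and the six stabilization terms cancel in three pairs, one pairing each of the $h^{-1}\widehat y_h^o z_h$, the $h^{-1}y_h\widehat z_h^o$, and the $h^{-1}\widehat y_h^o\widehat z_h^o$ combinations. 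The main subtlety to handle carefully is the bookkeeping of which terms live on the full skeleton $\partial\mathcal T_h$ versus only on $\partial\mathcal T_h\backslash\varepsilon_h^\partial$: the cancellation succeeds precisely because the antisymmetric choice of test arguments aligns each $y$-type factor on a given face with a matching $z$-type factor on the same portion of the skeleton. Once every term is seen to cancel pairwise, the sum is zero, which is the assertion.
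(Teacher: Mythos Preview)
Your proposal is correct and follows essentially the same approach as the paper: both expand the two copies of $\mathscr B$ directly from the definition~\eqref{def_B1}, apply elementwise integration by parts to convert the volume terms into face integrals, and then observe that every remaining term cancels pairwise. The paper simply writes out the full expansion and states the result is zero, whereas you spell out which terms cancel with which and flag the skeleton-versus-interior-face bookkeeping explicitly, but the underlying argument is identical.
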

\begin{proof}
	By the definition of $ \mathscr B $, and integration by parts:
	\begin{align*}
	\hspace{1em}&\hspace{-1em}  \mathscr B (\bm q_h,y_h,\widehat y_h^o;\bm p_h,-z_h,-\widehat z_h^o) + \mathscr B (\bm p_h,z_h,\widehat z_h^o;-\bm q_h,y_h,\widehat y_h^o)\\
	&=(\bm{q}_h, \bm p_h)_{{\mathcal{T}_h}}- (y_h, \nabla\cdot \bm p_h)_{{\mathcal{T}_h}}+\langle \widehat{y}_h^o, \bm p_h\cdot \bm{n} \rangle_{\partial{{\mathcal{T}_h}}\backslash {\varepsilon_h^{\partial}}} \\
	&\quad + (\bm{q}_h , \nabla z_h)_{{\mathcal{T}_h}}  - \langle\bm q_h\cdot\bm n +h^{-1}  y_h , z_h \rangle_{\partial{{\mathcal{T}_h}}}  + \langle h^{-1}  \widehat y_h^o, z_h \rangle_{\partial{{\mathcal{T}_h}}\backslash \varepsilon_h^{\partial}} \\
	&\quad+ \langle\bm q_h\cdot\bm n +h^{-1} (y_h-\widehat y_h^o), \widehat z_h^o  \rangle_{\partial{{\mathcal{T}_h}}\backslash\varepsilon_h^{\partial}}\\
	&\quad-(\bm{p}_h, \bm q_h)_{{\mathcal{T}_h}}+ (z_h, \nabla\cdot \bm q_h)_{{\mathcal{T}_h}} -\langle \widehat{z}_h^o, \bm q_h \cdot \bm{n} \rangle_{\partial{{\mathcal{T}_h}}\backslash {\varepsilon_h^{\partial}}}\\
	&\quad - (\bm{p}_h , \nabla y_h)_{{\mathcal{T}_h}} +\langle\bm p_h\cdot\bm n +h^{-1} z_h , y_h \rangle_{\partial{{\mathcal{T}_h}}} - \langle h^{-1} \widehat z_h^o, y_h \rangle_{\partial{{\mathcal{T}_h}}\backslash \varepsilon_h^{\partial}}\\
	&\quad- \langle\bm p_h\cdot\bm n + h^{-1}  (z_h-\widehat z_h^o), \widehat y_h^o \rangle_{\partial{{\mathcal{T}_h}}\backslash\varepsilon_h^{\partial}}\\
	&=0.
	\end{align*}
\end{proof}

\begin{proposition}\label{ex_uni}
	There exists a unique solution of the HDG equations \eqref{EDG_full_discrete}.
\end{proposition}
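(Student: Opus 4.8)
The plan is to exploit that \eqref{EDG_full_discrete} is a square linear system: the unknowns $(\bm q_h,\bm p_h,y_h,z_h,u_h,\widehat y_h^o,\widehat z_h^o)$ and the test functions $(\bm r_1,\bm r_2,w_1,w_2,w_3,\mu_1,\mu_2)$ range over the same finite-dimensional product space, so existence and uniqueness are equivalent, and it suffices to show that vanishing data forces the zero solution. I therefore set $f=0$, $g=0$ (hence $I_hg=0$), and $y_d=0$, and use the discrete optimality relation $u_h=\gamma^{-1}z_h$ from \eqref{HDG_discrete2_g} to eliminate $u_h$. The two remaining equations then read $\mathscr B(\bm q_h,y_h,\widehat y_h^o;\bm r_1,w_1,\mu_1)=\gamma^{-1}(z_h,w_1)_{\mathcal T_h}$ and $\mathscr B(\bm p_h,z_h,\widehat z_h^o;\bm r_2,w_2,\mu_2)=-(y_h,w_2)_{\mathcal T_h}$.

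The heart of the argument is an energy identity obtained by a choice of test functions matched precisely to \Cref{identical_equa}. I would test the first equation with $(\bm r_1,w_1,\mu_1)=(\bm p_h,-z_h,-\widehat z_h^o)$ and the second with $(\bm r_2,w_2,\mu_2)=(-\bm q_h,y_h,\widehat y_h^o)$. On the right-hand sides this produces $-\gamma^{-1}\|z_h\|_{\mathcal T_h}^2$ and $-\|y_h\|_{\mathcal T_h}^2$, respectively, while \Cref{identical_equa} shows that the sum of the two left-hand sides vanishes. Adding the two equations therefore yields
\begin{align*}
\gamma^{-1}\|z_h\|_{\mathcal T_h}^2+\|y_h\|_{\mathcal T_h}^2=0,
\end{align*}
and since $\gamma>0$ we conclude $y_h=0$, $z_h=0$, and hence $u_h=\gamma^{-1}z_h=0$.

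It remains to recover the fluxes and traces. With $y_h=z_h=u_h=0$ the first equation collapses to $\mathscr B(\bm q_h,0,\widehat y_h^o;\bm r_1,w_1,\mu_1)=0$ for all admissible $(\bm r_1,w_1,\mu_1)$. Choosing the diagonal test function $(\bm r_1,w_1,\mu_1)=(\bm q_h,0,\widehat y_h^o)$ and invoking the coercivity identity of \Cref{property_B} gives
\begin{align*}
0=\|\bm q_h\|_{\mathcal T_h}^2+\langle h^{-1}\widehat y_h^o,\widehat y_h^o\rangle_{\partial\mathcal T_h\backslash\varepsilon_h^\partial},
\end{align*}
whence $\bm q_h=0$ and $\widehat y_h^o=0$; the identical computation applied to the second equation yields $\bm p_h=0$ and $\widehat z_h^o=0$. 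This exhausts all unknowns, establishing uniqueness and therefore existence.

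I expect the only real obstacle to be bookkeeping rather than anything conceptual: one must select the test functions with exactly the signs that make the paired left-hand sides cancel through \Cref{identical_equa} while the right-hand sides assemble into a definite quadratic form. A secondary point worth stating explicitly is that the positivity of the stabilization weight $h^{-1}$, together with $\gamma>0$, is what upgrades the two identities from ``equal to zero'' to ``every term vanishes''; without the stabilization term in $\mathscr B$ this last step would fail.
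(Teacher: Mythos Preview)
Your proposal is correct and follows essentially the same route as the paper: both arguments reduce to uniqueness for the homogeneous system, use the cross-testing $(\bm r_1,w_1,\mu_1)=(\bm p_h,-z_h,-\widehat z_h^o)$, $(\bm r_2,w_2,\mu_2)=(-\bm q_h,y_h,\widehat y_h^o)$ together with \Cref{identical_equa} to force $y_h=z_h=u_h=0$, and then invoke the coercivity identity of \Cref{property_B} with the diagonal test functions to kill the remaining fluxes and traces. The only cosmetic difference is that the paper keeps the third equation and tests it with $w_3=z_h-\gamma u_h$, whereas you substitute $u_h=\gamma^{-1}z_h$ up front; the resulting identities are identical.
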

\begin{proof}
	Since the system \eqref{EDG_full_discrete} is finite dimensional, we only need to prove solutions are unique.  To do this, we show zero is the only solution of the system \eqref{EDG_full_discrete} for problem data $y_d = f =g= 0$.
	
	Take $(\bm r_1,w_1,\mu_1) = (\bm p_h,-z_h,-\widehat z_h^o)$, $(\bm r_2,w_2,\mu_2) = (-\bm q_h,y_h,\widehat y_h^o)$, and $w_3 = z_h -\gamma u_h $ in the EDG equations \eqref{EDG_full_discrete_a},  \eqref{EDG_full_discrete_b}, and \eqref{EDG_full_discrete_e}, respectively, and sum to obtain
	\begin{align*}
	\hspace{3em}&\hspace{-3em} \mathscr B  (\bm q_h,y_h,\widehat y_h^o;\bm p_h,-z_h,-\widehat z_h^o) + \mathscr B (\bm p_h,z_h,\widehat z_h^o;-\bm q_h,y_h,\widehat y_h^o) \\
	& =	- (y_h,y_h)_{\mathcal T_h} -  \gamma^{-1} (z_h,z_h)_{\mathcal T_h}.
	\end{align*}
	Since $\gamma>0$,  \Cref{identical_equa} implies $y_h =  u_h = z_h= 0$.
	
	Next, take $(\bm r_1,w_1,\mu_1) = (\bm q_h,y_h,\widehat y_h^o)$ and $(\bm r_2,w_2,\mu_2) = (\bm p_h,z_h,\widehat z_h^o)$ in the EDG equations \eqref{EDG_full_discrete_a}-\eqref{EDG_full_discrete_b}.   \Cref{property_B} gives $\bm q_h= \bm p_h= \bm 0 $ and $ \widehat y_h^o  = \widehat z_h^o=0$.
\end{proof}

\subsection{Proof of Main Result}
For our proof of the convergence results, we follow the strategy in \cite{HuShenSinglerZhangZheng_HDG_Dirichlet_control1} and consider the EDG discretization of the optimality system with the exact optimal control fixed.  This results in the following auxiliary problem: find $$({\bm{q}}_h(u),{\bm{p}}_h(u), y_h(u), z_h(u), {\widehat{y}}_h^o(u), {\widehat{z}}_h^o(u))\in \bm{V}_h\times\bm{V}_h\times W_h \times W_h\times \widetilde{M}_h(o)\times \widetilde{M}_h(o)$$ satisfying
\begin{subequations}\label{HDG_inter_u}
	\begin{align}
	\mathscr B(\bm q_h(u),y_h(u),\widehat{y}_h(u);\bm r_1, w_1,\mu_1)&=( f+ u,w_1)_{\mathcal T_h} \ \nonumber\\
	& \quad+ \langle I_hg, h^{-1} w_1-\bm r_1\cdot\bm n \rangle_{\varepsilon_h^\partial},\label{EDG_u_a} \\
	\mathscr B(\bm p_h(u),z_h(u),\widehat{z}_h(u);\bm r_2, w_2,\mu_2)&=(y_d-y_h(u), w_2)_{\mathcal T_h},\label{EDG_u_b}
	\end{align}
\end{subequations}
for all $\left(\bm{r}_1, \bm{r}_2,w_1,w_2,\mu_1,\mu_2\right)\in \bm{V}_h\times\bm{V}_h \times W_h\times W_h\times \widetilde{M}_h(o)\times \widetilde{M}_h(o)$.

We split our proof into seven steps, and estimate the errors between the solutions of the exact optimality system, the auxiliary problem, and the EDG discretization of the optimality system.

We start with the auxiliary problem and the mixed formulation of the optimality system \eqref{mixed_a}-\eqref{mixed_d}.  In Steps 1-3 below, we estimate the errors in the state $ y $ and the flux $ \bm{q} $.  We split the errors with the $ L^2 $ projections and the continuous interpolation operator.  We use the following notation:
\begin{align}\label{notation}
\begin{aligned}
\delta^{\bm q} &=\bm q-{\bm\Pi}_V\bm q,\\
\delta^y&=y- \Pi_W y,\\
\delta^{\widehat y} &= y-I_h y,\\
\widehat {\bm\delta}_1 &= \delta^{\bm q}\cdot\bm n+  h^{-1} (\delta^y- \delta^{\widehat{y}}),
\end{aligned}
&&
\begin{aligned}
\varepsilon^{\bm q}_h &= {\bm\Pi}_V \bm q-\bm q_h(u),\\
\varepsilon^{y}_h &= \Pi_W y-y_h(u),\\
\varepsilon^{\widehat y}_h &= I_h y-\widehat{y}_h(u),\\
\widehat {\bm \varepsilon }_1 &= \varepsilon_h^{\bm q}\cdot\bm n+h^{-1} (\varepsilon^y_h-\varepsilon_h^{\widehat y}).
\end{aligned}
\end{align}
where $\widehat y_h(u) = \widehat y_h^o(u)$ on $\varepsilon_h^o$ and $\widehat y_h(u) = I_h g$ on $\varepsilon_h^{\partial}$, which implies $\varepsilon_h^{\widehat y} = 0$ on $\varepsilon_h^{\partial}$.

\subsubsection{Step 1: The error equation for part 1 of the auxiliary problem \eqref{EDG_u_a}.} \label{subsec:proof_step1}
\begin{lemma} \label{lemma:step_1}
	We have
	\begin{align} \label{error_y}
	\hspace{3em}\hspace{-3em} \mathscr B(\varepsilon^{\bm q}_h,\varepsilon^y_h,\varepsilon^{\widehat{y}}_h;\bm r_1, w_1,\mu_1) =-\langle \delta^{\widehat{y}},\bm r_1\cdot\bm n \rangle_{\partial \mathcal{T}_h}-\langle\widehat{\bm \delta}_1,w_1\rangle_{\partial \mathcal{T}_h}+\langle \widehat{\bm \delta}_1,\mu_1 \rangle_{\partial \mathcal{T}_h\backslash\varepsilon_h^\partial}.
	\end{align}
\end{lemma}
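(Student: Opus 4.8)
The plan is to exploit the linearity of $\mathscr B$ in its first three arguments together with the error splitting in \eqref{notation}. Writing $\varepsilon_h^{\bm q} = \bm\Pi_V\bm q - \bm q_h(u)$, $\varepsilon_h^y = \Pi_W y - y_h(u)$, and $\varepsilon_h^{\widehat y} = I_h y - \widehat y_h(u)$, linearity gives
\begin{align*}
\mathscr B(\varepsilon_h^{\bm q},\varepsilon_h^y,\varepsilon_h^{\widehat y};\bm r_1,w_1,\mu_1) = \mathscr B(\bm\Pi_V\bm q,\Pi_W y, I_h y;\bm r_1,w_1,\mu_1) - \mathscr B(\bm q_h(u),y_h(u),\widehat y_h(u);\bm r_1,w_1,\mu_1).
\end{align*}
The second term on the right is exactly the right-hand side of the auxiliary problem \eqref{EDG_u_a}, namely $(f+u,w_1)_{\mathcal T_h} + \langle I_h g, h^{-1}w_1 - \bm r_1\cdot\bm n\rangle_{\varepsilon_h^\partial}$. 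Thus the whole proof reduces to evaluating $\mathscr B$ at the projected exact solution and showing it equals that same right-hand side plus the three projection-error terms appearing in \eqref{error_y}; the data terms then cancel upon subtraction.

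For the projected term I would substitute $(\bm\Pi_V\bm q,\Pi_W y, I_h y)$ into the definition \eqref{def_B1}. The volume integrals simplify immediately using the defining properties \eqref{def_L2}: since $\nabla\cdot\bm r_1$ and $\nabla w_1$ are piecewise polynomials of degree at most $k$, one has $(\bm\Pi_V\bm q,\bm r_1)_{\mathcal T_h} = (\bm q,\bm r_1)_{\mathcal T_h}$, $(\Pi_W y,\nabla\cdot\bm r_1)_{\mathcal T_h} = (y,\nabla\cdot\bm r_1)_{\mathcal T_h}$, and $(\bm\Pi_V\bm q,\nabla w_1)_{\mathcal T_h} = (\bm q,\nabla w_1)_{\mathcal T_h}$. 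Next I would invoke the strong form of the exact optimality system encoded in \eqref{mixed_a}--\eqref{mixed_b}: elementwise $\bm q = -\nabla y$ and $\nabla\cdot\bm q = f+u$, so integrating by parts on each $K$ yields the consistency identities $(\bm q,\bm r_1)_{\mathcal T_h} - (y,\nabla\cdot\bm r_1)_{\mathcal T_h} = -\langle y,\bm r_1\cdot\bm n\rangle_{\partial\mathcal T_h}$ and $-(\bm q,\nabla w_1)_{\mathcal T_h} = (f+u,w_1)_{\mathcal T_h} - \langle\bm q\cdot\bm n, w_1\rangle_{\partial\mathcal T_h}$. This produces the volume data $(f+u,w_1)_{\mathcal T_h}$ and converts every remaining contribution into a face integral over $\partial\mathcal T_h$.

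The remaining work is careful bookkeeping of the face integrals, splitting each over $\partial\mathcal T_h\backslash\varepsilon_h^\partial$ and $\varepsilon_h^\partial$ and re-expressing the exact traces through the projection errors via $\bm q\cdot\bm n = \bm\Pi_V\bm q\cdot\bm n + \delta^{\bm q}\cdot\bm n$ and $y = \Pi_W y + \delta^y = I_h y + \delta^{\widehat y}$. I expect this to be the main obstacle, since it hinges on two structural observations. First, on interior faces the normal component $\bm q\cdot\bm n$ of the exact flux is single-valued while $\mu_1\in\widetilde M_h(o)$ is continuous, so the two adjacent element contributions cancel and $\langle\bm q\cdot\bm n,\mu_1\rangle_{\partial\mathcal T_h\backslash\varepsilon_h^\partial} = 0$; this removes the spurious exact-flux term in the $\mu_1$ slot and leaves precisely $\langle\widehat{\bm\delta}_1,\mu_1\rangle_{\partial\mathcal T_h\backslash\varepsilon_h^\partial}$. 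Second, the leftover boundary contributions on $\varepsilon_h^\partial$ must be shown to recombine into the data terms $-\langle I_h g,\bm r_1\cdot\bm n\rangle_{\varepsilon_h^\partial}$ and $\langle h^{-1}I_h g, w_1\rangle_{\varepsilon_h^\partial}$: using that $y = g$ on $\Gamma$ together with $y = I_h y + \delta^{\widehat y}$ and $y = \Pi_W y + \delta^y$, the boundary remainders collapse to exactly the $\langle I_h g, h^{-1}w_1 - \bm r_1\cdot\bm n\rangle_{\varepsilon_h^\partial}$ of \eqref{EDG_u_a}. After these cancellations the interior-face terms assemble into $-\langle\delta^{\widehat y},\bm r_1\cdot\bm n\rangle_{\partial\mathcal T_h}$ and $-\langle\widehat{\bm\delta}_1,w_1\rangle_{\partial\mathcal T_h}$ over all of $\partial\mathcal T_h$, and subtracting the auxiliary right-hand side removes the data terms, leaving exactly \eqref{error_y}.
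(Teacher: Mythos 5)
Your proposal is correct and follows essentially the same route as the paper: evaluate $\mathscr B$ at $(\bm\Pi_V\bm q,\Pi_W y, I_h y)$, simplify the volume terms with the $L^2$-projection properties \eqref{def_L2}, invoke the elementwise consistency identities for the exact $(\bm q, y)$ (including $\langle \bm q\cdot\bm n,\mu_1\rangle_{\partial\mathcal T_h\backslash\varepsilon_h^\partial}=0$ from single-valuedness), and subtract the auxiliary equation \eqref{EDG_u_a} so that the data terms, including the boundary terms rewritten via $y=g$ and $I_h y = I_h g$ on $\varepsilon_h^\partial$, cancel. The only difference is organizational: you state the linearity splitting of $\mathscr B$ explicitly up front, whereas the paper performs the same subtraction at the end.
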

\begin{proof}
	By the definition of the EDG operator $\mathscr B$ in \eqref{def_B1}, we have
	\begin{align*}
	\hspace{1em}&\hspace{-1em} \mathscr B ({\bm \Pi}_V\bm q,{\Pi}_W y,I_h y;\bm r_1,w_1,\mu_1) \\
	& =({\bm \Pi}_V\bm q,\bm r_1)_{\mathcal T_h}-({\Pi}_W y,\nabla\cdot\bm r_1)_{\mathcal T_h}+\langle I_h y,\bm r_1\cdot\bm n\rangle_{\partial\mathcal T_h\backslash\varepsilon_h^\partial}\\
	&\quad-({\bm \Pi}_V\bm q ,  \nabla w_1)_{\mathcal T_h} +\langle {\bm \Pi}_V\bm q \cdot\bm n +h^{-1}  {\Pi}_W y ,w_1\rangle_{\partial\mathcal T_h}-\langle h^{-1}  I_h y ,w_1\rangle_{\partial\mathcal T_h\backslash\varepsilon_h^\partial}\nonumber\\
	&\quad-\langle  {\bm \Pi}_V\bm q \cdot\bm n+h^{-1} ( {\Pi}_W y -I_h y ),\mu_1\rangle_{\partial\mathcal T_h\backslash\varepsilon^{\partial}_h}.
	\end{align*}
	Using the properties of the $L^2$-orthogonal projections \eqref{def_L2} gives
	\begin{align*}
		\hspace{1em}&\hspace{-1em} \mathscr B ({\bm \Pi}_V\bm q,{\Pi}_W y,I_h y;\bm r_1,w_1,\mu_1) \\
		&=(\bm q,\bm r_1)_{\mathcal T_h}-( y,\nabla\cdot\bm r_1)_{\mathcal T_h}+\langle  y,\bm r_1\cdot\bm n\rangle_{\partial\mathcal T_h\backslash\varepsilon_h^\partial}-\langle \delta^{\widehat{y}},\bm r_1\cdot\bm n \rangle_{\partial\mathcal T_h\backslash\varepsilon_h^\partial}\\
		&\quad-(\bm q,\nabla w_1)_{\mathcal{T}_h}+\langle \bm q\cdot\bm n,w_1 \rangle_{\partial \mathcal{T}_h}+\langle h^{-1}  y,w_1 \rangle_{\varepsilon_h^\partial}-\langle \delta^{\bm q}\cdot\bm n+h^{-1} \delta^y,w_1 \rangle_{\partial \mathcal{T}_h}\\
		&\quad+\langle h^{-1}  \delta^{\widehat{y}},w_1 \rangle_{\partial\mathcal T_h\backslash\varepsilon_h^\partial}-\langle \bm q\cdot \bm n,\mu_1 \rangle_{\partial\mathcal T_h\backslash\varepsilon_h^\partial} +\langle \widehat{\bm \delta}_1,\mu_1 \rangle_{\partial\mathcal T_h\backslash\varepsilon_h^\partial}.
		%
		%
		%
	\end{align*}
		The exact solution $\bm q$ and $y$ satisfies
	\begin{align*}
	(\bm q,\bm r_1)_{\mathcal T_h}-(y,\nabla\cdot\bm r_1)_{\mathcal T_h}+\langle  y,\bm r_1\cdot\bm n\rangle_{\partial\mathcal T_h}&=0,\\
	-(\bm q , \nabla w_1)_{\mathcal T_h}+\langle {\bm q}\cdot\bm n,w_1\rangle_{\partial\mathcal T_h}&= (f+u, w_1)_{\mathcal T_h},\\
	\langle {\bm q}\cdot\bm n,\mu_1\rangle_{\partial\mathcal T_h\backslash\varepsilon^{\partial}_h}&=0,
	\end{align*}
	and therefore
	\begin{align*}
	\hspace{1em}&\hspace{-1em} \mathscr B ({\bm \Pi}_V\bm q,{\Pi}_W y,I_h y;\bm r_1,w_1,\mu_1) \\
		&=-\langle y,\bm r_1\cdot\bm n \rangle_{\varepsilon_h^\partial}-\langle \delta^{\widehat{y}},\bm r_1\cdot\bm n \rangle_{\partial\mathcal T_h\backslash\varepsilon_h^\partial}+(f+u_h,w_1)_{\mathcal{T}_h}+\langle h^{-1}  y,w_1 \rangle_{\varepsilon_h^\partial}\\
		&\quad-\langle \delta^{\bm q}\cdot\bm n+h^{-1}  \delta^y,w_1 \rangle_{\partial \mathcal{T}_h}+\langle h^{-1}  \delta^{\widehat{y}},w_1 \rangle_{\partial\mathcal T_h\backslash\varepsilon_h^\partial}+\langle \widehat{\bm \delta}_1,\mu_1 \rangle_{\partial\mathcal T_h\backslash\varepsilon_h^\partial}
	\end{align*}
	Subtracting equation \eqref{EDG_u_a} from the above equation completes the proof.
\end{proof}

\subsubsection{Step 2: Estimate for $\varepsilon_h^{ q}$.}
\label{subsec:proof_step2}

\begin{lemma} \label{energy_norm_q}
	We have
	\begin{align}
		\|\varepsilon_h^{\bm q}\|_{\mathcal{T}_h}+h^{-\frac{1}{2}}\|\varepsilon_h^y-\varepsilon_h^{\widehat{y}}\|_{\partial \mathcal{T}_h} \lesssim h^{k} (|\bm q|_{k+1}+|y|_{k+1}).
	\end{align}
\end{lemma}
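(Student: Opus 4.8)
The plan is to test the error equation \eqref{error_y} against the errors themselves and then invoke the coercivity identity of \Cref{property_B}. Concretely, I would choose $(\bm r_1, w_1, \mu_1) = (\varepsilon_h^{\bm q}, \varepsilon_h^y, \varepsilon_h^{\widehat y})$, which is admissible because $\varepsilon_h^{\widehat y} = I_h y - \widehat y_h(u) \in \widetilde M_h(o)$. The left-hand side of \eqref{error_y} then becomes $\mathscr B(\varepsilon_h^{\bm q}, \varepsilon_h^y, \varepsilon_h^{\widehat y}; \varepsilon_h^{\bm q}, \varepsilon_h^y, \varepsilon_h^{\widehat y})$, and \Cref{property_B} evaluates it to
\[
\|\varepsilon_h^{\bm q}\|_{\mathcal T_h}^2 + \langle h^{-1}(\varepsilon_h^y - \varepsilon_h^{\widehat y}), \varepsilon_h^y - \varepsilon_h^{\widehat y}\rangle_{\partial\mathcal T_h\backslash\varepsilon_h^\partial} + \langle h^{-1}\varepsilon_h^y, \varepsilon_h^y\rangle_{\varepsilon_h^\partial}.
\]
Since $\varepsilon_h^{\widehat y} = 0$ on $\varepsilon_h^\partial$, as noted after \eqref{notation}, the last two terms merge into a single term $\|h^{-1/2}(\varepsilon_h^y - \varepsilon_h^{\widehat y})\|_{\partial\mathcal T_h}^2$, so the left-hand side is precisely the square of the quantity we wish to bound. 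Abbreviating $X = \|\varepsilon_h^{\bm q}\|_{\mathcal T_h}$ and $Y = \|h^{-1/2}(\varepsilon_h^y - \varepsilon_h^{\widehat y})\|_{\partial\mathcal T_h}$, the identity reads $X^2 + Y^2 = \mathrm{RHS}$.

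Next I would estimate the three boundary terms on the right of \eqref{error_y}. For the first, $-\langle \delta^{\widehat y}, \varepsilon_h^{\bm q}\cdot\bm n\rangle_{\partial\mathcal T_h}$, the Cauchy--Schwarz inequality, the discrete trace (inverse) inequality $\|\varepsilon_h^{\bm q}\|_{\partial\mathcal T_h}\lesssim h^{-1/2}\|\varepsilon_h^{\bm q}\|_{\mathcal T_h}$ from \eqref{classical_ine}, and the interpolation bound $\|\delta^{\widehat y}\|_{\partial\mathcal T_h}\lesssim h^{k+1/2}|y|_{k+1}$ combine to give a bound of order $h^k|y|_{k+1}\,X$. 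For the remaining two terms I would first use $\varepsilon_h^{\widehat y} = 0$ on $\varepsilon_h^\partial$ to merge them into $-\langle \widehat{\bm\delta}_1, \varepsilon_h^y - \varepsilon_h^{\widehat y}\rangle_{\partial\mathcal T_h}$, which Cauchy--Schwarz bounds by $\|h^{1/2}\widehat{\bm\delta}_1\|_{\partial\mathcal T_h}\,Y$. Expanding $\widehat{\bm\delta}_1 = \delta^{\bm q}\cdot\bm n + h^{-1}(\delta^y - \delta^{\widehat y})$ and applying the trace estimates in \eqref{classical_ine} (together with $h\le 1$) yields $\|h^{1/2}\widehat{\bm\delta}_1\|_{\partial\mathcal T_h}\lesssim h^k(|\bm q|_{k+1} + |y|_{k+1})$.

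Assembling these estimates produces $X^2 + Y^2 \lesssim h^k(|\bm q|_{k+1} + |y|_{k+1})(X + Y)$. Since $X + Y \le \sqrt 2\,(X^2+Y^2)^{1/2}$, I would cancel one factor of $(X^2+Y^2)^{1/2}$ and conclude $X + Y \lesssim h^k(|\bm q|_{k+1} + |y|_{k+1})$, which is exactly the assertion of \Cref{energy_norm_q}.

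I expect the main difficulty to be bookkeeping the powers of $h$ in the trace terms rather than any conceptual hurdle. The term $h^{-1}(\delta^y - \delta^{\widehat y})$ inside $\widehat{\bm\delta}_1$ loses half an order relative to the volume projection estimates and only delivers $h^k$, and this is precisely what caps the flux convergence at order $k$. I would also be careful to apply the discrete trace inequality only to the finite element functions $\varepsilon_h^{\bm q}\in\bm V_h$ and $\varepsilon_h^y - \varepsilon_h^{\widehat y}$, while controlling the projection errors $\delta^{\bm q}, \delta^y, \delta^{\widehat y}$ directly through the boundary bounds in \eqref{classical_ine}.
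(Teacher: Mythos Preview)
Your proposal is correct and follows essentially the same approach as the paper: test \eqref{error_y} with $(\varepsilon_h^{\bm q},\varepsilon_h^y,\varepsilon_h^{\widehat y})$, combine the last two right-hand terms into $-\langle\widehat{\bm\delta}_1,\varepsilon_h^y-\varepsilon_h^{\widehat y}\rangle_{\partial\mathcal T_h}$ via $\varepsilon_h^{\widehat y}=0$ on $\varepsilon_h^\partial$, bound each boundary term with Cauchy--Schwarz and the trace/interpolation estimates \eqref{classical_ine}, and finish with the coercivity identity of \Cref{property_B}. The only cosmetic difference is that the paper applies \Cref{property_B} at the end rather than at the start.
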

\begin{proof}
	Take $(\bm r_1,w_1,\mu_1)=(\bm \varepsilon_h^{\bm q},\varepsilon_h^y,\varepsilon_h^{\widehat{y}})$ in equation \eqref{error_y} and use $\varepsilon_h^{\widehat{y}}=0$ on $\varepsilon_h^\partial$ to get
	\begin{align*}
	\hspace{3em}\hspace{-3em} \mathscr B(\varepsilon^{\bm q}_h,\varepsilon^y_h,\varepsilon^{\widehat{y}}_h;\bm \varepsilon_h^{\bm q},\varepsilon_h^y,\varepsilon_h^{\widehat{y}}) &=-\langle \delta^{\widehat{y}},\varepsilon_h^{\bm q}\cdot\bm n \rangle_{\partial \mathcal{T}_h}-\langle\widehat{\bm \delta}_1,\varepsilon_h^y\rangle_{\partial \mathcal{T}_h}+\langle \widehat{\bm \delta}_1,\varepsilon_h^{\widehat{y}} \rangle_{\partial \mathcal{T}_h\backslash\varepsilon_h^\partial}\\
	&=-\langle \delta^{\widehat{y}},\varepsilon_h^{\bm q}\cdot\bm n \rangle_{\partial \mathcal{T}_h}-\langle\widehat{\bm \delta}_1,\varepsilon_h^y-\varepsilon_h^{\widehat{y}}\rangle_{\partial \mathcal{T}_h}.
	\end{align*}
	Next, we have
	\begin{align*}
		-\langle \delta^{\widehat{y}},\varepsilon_h^{\bm q}\cdot\bm n \rangle_{\partial \mathcal{T}_h}&\le C\| \delta^{\widehat{y}} \|_{\partial \mathcal{T}_h}\|\varepsilon_h^{\bm q}\|_{\partial \mathcal{T}_h} \le C h^{-\frac{1}{2}} \| \delta^{\widehat{y}} \|_{\partial \mathcal{T}_h}\| \varepsilon_h^{\bm q}\|_{\mathcal{T}_h},\\
		\langle\widehat{\bm \delta}_1,\varepsilon_h^y-\varepsilon_h^{\widehat{y}}\rangle_{\partial \mathcal{T}_h}&=-\langle \delta^{\bm q}\cdot\bm n+\frac{1}{h}(\delta^y-\delta^{\widehat{y}}),\varepsilon_h^y-\varepsilon_h^{\widehat{y}}\rangle_{\partial \mathcal{T}_h}\\
		&\le (\|\delta^{\bm q}\|_{\partial \mathcal{T}_h}+h^{-1}\| \delta^y \|_{\partial \mathcal{T}_h}+h^{-1}\| \delta^{\widehat{y}} \|_{\partial \mathcal{T}_h})\| \varepsilon_h^y-\varepsilon_h^{\widehat{y}} \|_{\partial \mathcal{T}_h}\\
		&=(h^\frac{1}{2}\|\delta^{\bm q}\|_{\partial \mathcal{T}_h}+h^{-\frac{1}{2}}\| \delta^y \|_{\partial \mathcal{T}_h}+h^{-\frac{1}{2}}\| \delta^{\widehat{y}} \|_{\partial \mathcal{T}_h})h^{-\frac{1}{2}}\| \varepsilon_h^y-\varepsilon_h^{\widehat{y}} \|_{\partial \mathcal{T}_h}.
	\end{align*}
	The energy property of operator $\mathscr B$ in \Cref{property_B} gives
	\begin{align*}
		\|\varepsilon_h^{\bm q}\|_{\mathcal{T}_h}+h^{-\frac{1}{2}}\|\varepsilon_h^y-\varepsilon_h^{\widehat{y}}\|_{\partial \mathcal{T}_h} &\lesssim  h^{-\frac{1}{2}}\| \delta^{\widehat{y}} \|_{\partial \mathcal{T}_h}+h^{-\frac{1}{2}}\|\delta^y\|_{\partial \mathcal{T}_h}+h^\frac{1}{2}\|\delta^{\bm q}\|_{\partial \mathcal{T}_h}\\
		&\lesssim h^{k} (|\bm q|_{k+1}+|y|_{k+1}).
	\end{align*}
\end{proof}

\subsubsection{Step 3: Estimate for $\varepsilon_h^y$ by a duality argument.} \label{subsec:proof_step_3}
Next, we introduce the dual problem for any given $\Theta$ in $L^2(\Omega)$:
\begin{equation}\label{Dual_PDE}
\begin{split}
\bm\Phi+\nabla\Psi&=0\qquad~\text{in}\ \Omega,\\
\nabla\cdot\bm{\Phi}&=\Theta \qquad\text{in}\ \Omega,\\
\Psi&=0\qquad~\text{on}\ \partial\Omega.
\end{split}
\end{equation}
Since the domain $\Omega$ is convex, we have the regularity estimate
\begin{align}\label{dual_esti}
\norm{\bm \Phi}_{1,\Omega} + \norm{\Psi}_{2,\Omega} \le C_{\text{reg}} \norm{\Theta}_\Omega.
\end{align}
In the proof below for estimating $\varepsilon_h^{y}$, we use the following notation:
\begin{align}
\delta^{\bm \Phi} &=\bm \Phi-{\bm\Pi}_V\bm \Phi, \quad \delta^\Psi=\Psi- {\Pi}_W \Psi, \quad
\delta^{\widehat \Psi} = \Psi-I_h \Psi.\label{notation_2}
\end{align}
\begin{lemma} \label{dual_y}
	We have
	\begin{align}
		\| \varepsilon_h^y \|_{\mathcal{T}_h} \lesssim h^{k+1} (|\bm q|_{k+1}+|y|_{k+1}).
	\end{align}
\end{lemma}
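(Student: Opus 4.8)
The plan is to estimate $\|\varepsilon_h^y\|_{\mathcal{T}_h}$ by a standard Aubin--Nitsche duality argument, using the dual problem \eqref{Dual_PDE} with the specific choice $\Theta = \varepsilon_h^y$. The guiding principle is to express $(\varepsilon_h^y, \varepsilon_h^y)_{\mathcal{T}_h} = (\varepsilon_h^y, \Theta)_{\mathcal{T}_h}$ in a way that involves the error equation \eqref{error_y} tested against the projections of the dual solution, so that the already-established bounds from \Cref{energy_norm_q} and the projection estimates \eqref{classical_ine} can be brought to bear. With $\Theta = \varepsilon_h^y$, the regularity estimate \eqref{dual_esti} gives $\|\bm\Phi\|_{1,\Omega} + \|\Psi\|_{2,\Omega} \le C_{\mathrm{reg}} \|\varepsilon_h^y\|_{\mathcal{T}_h}$, which will be the source of an extra factor of $h$ beyond what the energy estimate alone provides.

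First I would write down the discrete analogue of the operator $\mathscr{B}$ applied to the projections of the dual solution $(\bm\Phi, \Psi)$. Specifically, I expect that testing the error equation \eqref{error_y} with $(\bm r_1, w_1, \mu_1) = (\bm\Pi_V \bm\Phi, \Pi_W \Psi, I_h\Psi)$ and separately evaluating $\mathscr{B}(\varepsilon_h^{\bm q}, \varepsilon_h^y, \varepsilon_h^{\widehat y}; \bm\Pi_V\bm\Phi, \Pi_W\Psi, I_h\Psi)$ directly from the definition \eqref{def_B1}, combined with integration by parts against the dual PDE \eqref{Dual_PDE}, will produce the identity $(\varepsilon_h^y, \varepsilon_h^y)_{\mathcal{T}_h}$ plus boundary terms. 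The dual equations $\bm\Phi = -\nabla\Psi$ and $\nabla\cdot\bm\Phi = \Theta = \varepsilon_h^y$, together with the homogeneous boundary condition $\Psi = 0$ on $\partial\Omega$, are what force the volume term to collapse to exactly $\|\varepsilon_h^y\|_{\mathcal{T}_h}^2$. The remaining terms should all be inner products over $\partial\mathcal{T}_h$ pairing the projection errors $\delta^{\widehat y}, \widehat{\bm\delta}_1$ of the primal solution against the projection errors $\delta^{\bm\Phi}, \delta^\Psi, \delta^{\widehat\Psi}$ of the dual solution, plus terms pairing the discrete errors $\varepsilon_h^{\bm q}, \varepsilon_h^y - \varepsilon_h^{\widehat y}$ against the dual projection errors.

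Once this identity is in hand, I would bound each boundary term by Cauchy--Schwarz, inserting inverse-trace factors of $h^{\pm 1/2}$ as needed via the last bound in \eqref{classical_ine}. The projection-error factors $\|\delta^{\bm q}\|_{\partial\mathcal{T}_h}, \|\delta^y\|_{\partial\mathcal{T}_h}, \|\delta^{\widehat y}\|_{\partial\mathcal{T}_h}$ contribute $h^{k+1/2}$ powers (times $|\bm q|_{k+1}$ or $|y|_{k+1}$), the dual projection-error factors $\|\delta^{\bm\Phi}\|_{\partial\mathcal{T}_h}, \|\delta^\Psi\|_{\partial\mathcal{T}_h}, \|\delta^{\widehat\Psi}\|_{\partial\mathcal{T}_h}$ contribute $h^{1/2}$ powers times $\|\bm\Phi\|_{1,\Omega}$ or $\|\Psi\|_{2,\Omega}$, and the discrete-error factors $\|\varepsilon_h^{\bm q}\|_{\mathcal{T}_h}$ and $h^{-1/2}\|\varepsilon_h^y - \varepsilon_h^{\widehat y}\|_{\partial\mathcal{T}_h}$ are controlled by \Cref{energy_norm_q} at order $h^k$. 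The key bookkeeping is that each boundary term accumulates a total of $h^{k+1}$ after multiplying the primal order-$(k+1/2)$ or discrete order-$k$ contributions against the dual order-$(1/2)$ or order-$(1)$ contributions, finally using the regularity bound \eqref{dual_esti} to replace $\|\bm\Phi\|_{1,\Omega} + \|\Psi\|_{2,\Omega}$ by $\|\varepsilon_h^y\|_{\mathcal{T}_h}$ and dividing through.

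The main obstacle I anticipate is algebraic rather than conceptual: correctly deriving the duality identity from \eqref{def_B1}, keeping track of which boundary terms survive on $\varepsilon_h^\partial$ versus $\partial\mathcal{T}_h \setminus \varepsilon_h^\partial$, and verifying that the single-valuedness of $\widehat y_h^o, I_h\Psi$ across interior faces combines with the flux conservation conditions \eqref{HDG_discrete2_e}--\eqref{HDG_discrete2_f} to cancel the jump terms. In particular, I must ensure the factor of $h$ in the stabilization parameter $h^{-1}$ is correctly partnered with the trace-inverse inequalities so that no term is left at the suboptimal order $h^k$; the delicate point is that the term pairing $\varepsilon_h^{\bm q}$ against $\delta^\Psi$ must borrow the extra $h^{1/2}$ from the dual regularity to upgrade the energy estimate's $h^k$ to the desired $h^{k+1}$.
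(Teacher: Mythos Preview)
Your proposal is correct and follows essentially the same Aubin--Nitsche duality argument as the paper: test the error equation against the projections $(\bm\Pi_V\bm\Phi,\Pi_W\Psi,I_h\Psi)$ of the dual solution, use integration by parts together with the dual PDE to extract $\|\varepsilon_h^y\|_{\mathcal T_h}^2$, and bound the remaining boundary terms with Cauchy--Schwarz, the projection estimates \eqref{classical_ine}, the energy bound of \Cref{energy_norm_q}, and the regularity \eqref{dual_esti}. One small correction: the cancellation of the jump terms does not rely on the flux conservation conditions \eqref{HDG_discrete2_e}--\eqref{HDG_discrete2_f}; what you actually need is that $\varepsilon_h^{\widehat y}$ is single-valued on interior faces and vanishes on $\varepsilon_h^\partial$, so that $\langle \varepsilon_h^{\widehat y},\bm\Phi\cdot\bm n\rangle_{\partial\mathcal T_h\backslash\varepsilon_h^\partial}=0$ for the smooth field $\bm\Phi$.
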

\begin{proof}
	First, take $(\bm r_1,w_1,\mu_1)=(\bm \Pi_V \bm \Phi, -\Pi_W \Psi,-I_h \Psi)$ in equation \eqref{error_y} to get
	\begin{align*}
\hspace{1em}&\hspace{-1em} \mathscr B(\varepsilon^{\bm q}_h,\varepsilon^y_h,\varepsilon^{\widehat{y}}_h;\bm \Pi_V \bm \Phi, -\Pi_W \Psi,-I_h \Psi)\\
&=(\varepsilon_h^{\bm q},\bm \Pi_V \bm \Phi)_{\mathcal T_h}-( \varepsilon_h^y,\nabla\cdot\bm \Pi_V \bm \Phi)_{\mathcal T_h}+\langle \varepsilon_h^{\widehat{y}},\bm \Pi_V \bm \Phi\cdot\bm n\rangle_{\partial\mathcal T_h\backslash \varepsilon_h^\partial} \\
&\quad +(\varepsilon_h^{\bm q},  \nabla \Pi_W \Psi)_{\mathcal T_h}-\langle \varepsilon_h^{\bm q}\cdot\bm n +h^{-1}  \varepsilon_h^y,\Pi_W \Psi\rangle_{\partial\mathcal T_h}+\langle h^{-1} \varepsilon_h^{\widehat{y}},\Pi_W \Psi \rangle_{\partial \mathcal{T}_h\backslash \varepsilon_h^\partial}\\
&\quad+\langle  \varepsilon_h^{\bm q}\cdot\bm n+h^{-1}(\varepsilon_h^y-\varepsilon_h^{\widehat{y}}),I_h \Psi\rangle_{\partial\mathcal T_h\backslash\varepsilon^{\partial}_h}.
	\end{align*}
	Next, integration by parts gives
	\begin{align*}
	-(\varepsilon_h^y,\nabla\cdot \bm \Pi_V \bm \Phi)_{\partial \mathcal{T}_h}&=(\nabla \varepsilon_h^y,\bm \Phi)_{\mathcal{T}_h}-\langle \varepsilon_h^y,\bm \Pi_V\Phi \cdot \bm n\rangle_{\partial \mathcal{T}_h}\\
	&=-(\varepsilon_h^y,\nabla\cdot \bm \Phi)_{\mathcal{T}_h}+\langle \varepsilon_h^y,\delta^{\bm \Phi} \cdot \bm n\rangle_{\partial \mathcal{T}_h},\\
	( \varepsilon_h^q,\nabla \Pi_W \Psi)_{\mathcal{T}_h}&=-(\nabla\cdot \varepsilon_h^q, \Psi)_{\mathcal{T}_h}+\langle \varepsilon_h^{\bm q}\cdot\bm n,\Pi_W \Psi \rangle_{\partial \mathcal{T}_h}\\
	&=(\varepsilon_h^{\bm q},\nabla \Psi)_{\mathcal{T}_h}-\langle \varepsilon_h^{\bm q}\cdot\bm n,\delta^\Psi \rangle_{\partial \mathcal{T}_h}.
	\end{align*}
	Since $ \bm \Phi $ and $ \Psi $ satisfy the dual problem \eqref{Dual_PDE} with $\Theta=-\varepsilon_h^y$, we obtain
	\begin{align*}
		\hspace{1em}&\hspace{-1em} \mathscr B(\varepsilon^{\bm q}_h,\varepsilon^y_h,\varepsilon^{\widehat{y}}_h;\bm \Pi_V \bm \Phi, -\Pi_W \Psi,-I_h \Psi)\\
		&=(\varepsilon_h^{\bm q}, \bm \Phi)_{\mathcal T_h}-( \varepsilon_h^y,\nabla\cdot \bm \Phi)_{\mathcal T_h}+\langle \varepsilon_h^y-\varepsilon_h^{\widehat{y}},\delta^{\bm \Phi}\cdot\bm n\rangle_{\partial\mathcal T_h} \\
		&\quad +(\varepsilon_h^{\bm q},  \nabla \Psi)_{\mathcal T_h}-\langle \varepsilon_h^{\bm q}\cdot\bm n,\Psi \rangle_{\partial \mathcal{T}_h}-\langle h^{-1} \varepsilon_h^y,\Pi_W \Psi\rangle_{\partial\mathcal T_h}\\
		&\quad+\langle h^{-1}  \varepsilon_h^{\widehat{y}},\Pi_W \Psi \rangle_{\partial \mathcal{T}_h}+\langle  \varepsilon_h^{\bm q}\cdot\bm n+h^{-1} (\varepsilon_h^y-\varepsilon_h^{\widehat{y}}),I_h \Psi\rangle_{\partial\mathcal T_h}\\
		&=(\varepsilon_h^y,\varepsilon_h^y)_{\mathcal{T}_h}+\langle \varepsilon_h^y-\varepsilon_h^{\widehat{y}},\delta^{\bm \Phi}\cdot \bm n \rangle_{\partial \mathcal{T}_h}-\langle \varepsilon_h^{\bm q}\cdot\bm n,\delta^{\widehat{\Psi}} \rangle_{\partial \mathcal{T}_h}\\
		&\quad +\langle h^{-1}  (\varepsilon_h^y-\varepsilon_h^{\widehat{y}}),\delta^\Psi-\delta^{\widehat{\Psi}} \rangle_{\partial \mathcal{T}_h},
	\end{align*}
	where we used $\langle \varepsilon_h^{\widehat{y}},\bm \Phi\cdot \bm n \rangle_{\partial \mathcal{T}_h\backslash\varepsilon_h^\partial}=0$ and $\Psi= \delta^{\widehat{y}}=0$ on $\varepsilon_h^\partial$.
	
	On the other hand, from equation \eqref{error_y} and $\langle \delta^{\widehat{y}},{\bm \Phi}\cdot\bm n \rangle_{\partial \mathcal{T}_h} =0$ we have
	\begin{align*}
	\hspace{4em}&\hspace{-4em}  \mathscr B(\varepsilon^{\bm q}_h,\varepsilon^y_h,\varepsilon^{\widehat{y}}_h;\bm \Pi_V \bm \Phi, -\Pi_W \Psi,-I_h \Psi) \\
	&=-\langle \delta^{\widehat{y}},\bm \Pi_V \bm \Phi\cdot\bm n \rangle_{\partial \mathcal{T}_h}+\langle\widehat{\bm \delta}_1,\delta^\Psi-\delta^{\widehat{\Psi}}\rangle_{\partial \mathcal{T}_h}\\
	&=\langle \delta^{\widehat{y}},\delta^{\bm \Phi}\cdot\bm n \rangle_{\partial \mathcal{T}_h}+\langle\widehat{\bm \delta}_1,\delta^\Psi-\delta^{\widehat{\Psi}}\rangle_{\partial \mathcal{T}_h}.
	\end{align*}
	Comparing with the two equations above, we have
	\begin{align*}
		\|\varepsilon_h^y\|_{\mathcal{T}_h}^2&=\langle \delta^{\widehat{y}},\delta^{\bm \Phi}\cdot\bm n \rangle_{\partial \mathcal{T}_h}+\langle\widehat{\bm \delta}_1,\delta^\Psi-\delta^{\widehat{\Psi}}\rangle_{\partial \mathcal{T}_h}-\langle \varepsilon_h^y-\varepsilon_h^{\widehat{y}},\delta^{\bm \Phi}\cdot \bm n \rangle_{\partial \mathcal{T}_h}\\
		&\quad +\langle \varepsilon_h^{\bm q}\cdot\bm n,\delta^{\widehat{\Psi}} \rangle_{\partial \mathcal{T}_h}-\langle h^{-1} (\varepsilon_h^y-\varepsilon_h^{\widehat{y}}),\delta^\Psi-\delta^{\widehat{\Psi}} \rangle_{\partial \mathcal{T}_h}\\
		&=:T_1+T_2+T_3+T_4+T_5.
	\end{align*}
	By the Cauchy-Schwarz inequality, \Cref{energy_norm_q}, and \eqref{classical_ine}, we have
	\begin{align*}
		T_1 &\le \|\delta^{\widehat{y}}\|_{\partial \mathcal{T}_h} \|\delta^{\bm \Phi}\|_{\partial \mathcal{T}_h}\lesssim h^\frac{1}{2} \|\delta^{\widehat{y}}\|_{\partial \mathcal{T}_h}\|\bm \Phi\|_{1,\Omega}\lesssim h^\frac{1}{2} \|\delta^{\widehat{y}}\|_{\partial \mathcal{T}_h}\|\varepsilon_h^y\|_{\Omega},\\
		&\lesssim h^{k+1}(|\bm q|_{k+1}+|y|_{k+1})\| \varepsilon_h^y \|_{\mathcal{T}_h},\\
		T_2 &\lesssim h^{\frac{3}{2}} (\|\delta^{\bm q}\|_{\partial \mathcal{T}_h}+h^{-1} \|\delta^y-\delta^{\widehat{y}}\|_{\partial \mathcal{T}_h})\|\Psi\|_{2,\Omega}\\
		&\lesssim  ( h\|\delta^{\bm q}\|_{\mathcal{T}_h} +h^{\frac{1}{2}}(\|\delta^y\|_{\partial \mathcal{T}_h}+\| \delta^{\widehat{y}} \|_{\partial \mathcal{T}_h}))\| \varepsilon_h^y \|_{\mathcal{T}_h}\\
		&\lesssim h^{k+1}(|\bm q|_{k+1}+|y|_{k+1})\| \varepsilon_h^y \|_{\mathcal{T}_h},\\
		T_3&\le \|\varepsilon_h^y-\varepsilon_h^{\widehat{y}}\|_{\partial \mathcal{T}_h}\| \delta^{\bm \Phi} \|_{\partial \mathcal{T}_h}\lesssim h^\frac{1}{2} \|\varepsilon_h^y-\varepsilon_h^{\widehat{y}}\|_{\partial \mathcal{T}_h} \|\bm \Phi\|_{1,\Omega}\\
		&\lesssim h^{1/2} \|\varepsilon_h^y-\varepsilon_h^{\widehat{y}}\|_{\partial \mathcal{T}_h} \|\varepsilon_h^y\|_{\mathcal{T}_h}\lesssim h^{k+1}(|\bm q|_{k+1}+|y|_{k+1})\| \varepsilon_h^y \|_{\mathcal{T}_h},\\
		T_4 &\lesssim h\|\varepsilon_h^{\bm q}\|_{\mathcal{T}_h}\|\varepsilon_h^y\|_{\mathcal{T}_h}\lesssim h^{k+1}(|\bm q|_{k+1}+|y|_{k+1})\| \varepsilon_h^y \|_{\mathcal{T}_h},\\
		T_5 &\lesssim h^\frac{1}{2} \| \varepsilon_h^y-\varepsilon_h^{\widehat{y}} \|_{\partial \mathcal{T}_h} \| \varepsilon_h^y \|_{\mathcal{T}_h}\lesssim h^{k+1}(|\bm q|_{k+1}+|y|_{k+1})\| \varepsilon_h^y \|_{\mathcal{T}_h}.
	\end{align*}
	Summing $T_1$ to $T_5$ gives
	\begin{align*}
	\| \varepsilon_h^y \|_{\mathcal{T}_h} \lesssim h^{k+1} (|\bm q|_{k+1}+|y|_{k+1}).
	\end{align*}
\end{proof}

The triangle inequality gives convergence rates for $\|\bm q -\bm q_h(u)\|_{\mathcal T_h}$ and $\|y -y_h(u)\|_{\mathcal T_h}$:

\begin{lemma}\label{le}
	\begin{subequations}
		\begin{align}
		\|\bm q -\bm q_h(u)\|_{\mathcal T_h} &\le \|\delta^{\bm q}\|_{\mathcal T_h} + \|\varepsilon_h^{\bm q}\|_{\mathcal T_h} \lesssim h^{k}(|\bm q|_{k+1}+|y|_{k+1}),\\
		\|y -y_h(u)\|_{\mathcal T_h} &\le \|\delta^{y}\|_{\mathcal T_h} + \|\varepsilon_h^{y}\|_{\mathcal T_h}  \lesssim  h^{k+1}(|\bm q|_{k+1}+|y|_{k+1}).
		\end{align}
	\end{subequations}
\end{lemma}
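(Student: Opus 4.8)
The plan is to obtain both estimates by a direct triangle-inequality split, since all the genuinely analytic work has already been done in the preceding lemmas. From the notation introduced in \eqref{notation}, the total errors decompose exactly as $\bm q - \bm q_h(u) = \delta^{\bm q} + \varepsilon_h^{\bm q}$ and $y - y_h(u) = \delta^y + \varepsilon_h^y$, where the first summand is the $L^2$-projection error of the exact solution and the second is the discrete error between that projection and the auxiliary-problem solution. The triangle inequality then immediately yields the two left-hand inequalities $\|\bm q - \bm q_h(u)\|_{\mathcal T_h} \le \|\delta^{\bm q}\|_{\mathcal T_h} + \|\varepsilon_h^{\bm q}\|_{\mathcal T_h}$ and $\|y - y_h(u)\|_{\mathcal T_h} \le \|\delta^y\|_{\mathcal T_h} + \|\varepsilon_h^y\|_{\mathcal T_h}$, so only the two asymptotic rates remain to be verified.

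For the flux, I would bound the projection error $\|\delta^{\bm q}\|_{\mathcal T_h}$ by the classical estimate \eqref{classical_ine}, giving order $h^{k+1}|\bm q|_{k+1}$, and the discrete error $\|\varepsilon_h^{\bm q}\|_{\mathcal T_h}$ by \Cref{energy_norm_q}, giving order $h^{k}(|\bm q|_{k+1}+|y|_{k+1})$. Since $h \le 1$, the first term is dominated by the second, so the sum is $O\!\left(h^{k}(|\bm q|_{k+1}+|y|_{k+1})\right)$, as claimed. For the state, the same projection bound gives $\|\delta^y\|_{\mathcal T_h} = O(h^{k+1}|y|_{k+1})$, and \Cref{dual_y} gives $\|\varepsilon_h^y\|_{\mathcal T_h} = O\!\left(h^{k+1}(|\bm q|_{k+1}+|y|_{k+1})\right)$; here both terms are already of the same optimal order $h^{k+1}$ and simply add to the stated bound.

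Because every ingredient is already in hand, I do not expect any real obstacle; the only bookkeeping point is that the bounds in \eqref{classical_ine} are written with the full norm $\|\cdot\|_{k+1,\Omega}$, whereas the conclusion is phrased with the seminorm $|\cdot|_{k+1}$. I would resolve this by invoking the standard Bramble--Hilbert form of the $L^2$-projection estimate, namely $\|\delta^{\bm q}\|_{\mathcal T_h} \lesssim h^{k+1}|\bm q|_{k+1}$ and $\|\delta^y\|_{\mathcal T_h} \lesssim h^{k+1}|y|_{k+1}$, which is valid because $\bm\Pi_V$ and $\Pi_W$ reproduce polynomials of degree $k$. This makes the final right-hand sides depend only on the seminorms, matching the statement exactly.
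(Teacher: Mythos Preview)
Your proposal is correct and matches the paper's approach exactly: the paper simply states that the triangle inequality together with \Cref{energy_norm_q}, \Cref{dual_y}, and the projection bounds \eqref{classical_ine} yields the result, with no further argument given. Your observation about the norm versus seminorm is a valid minor bookkeeping point that the paper does not address explicitly.
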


\subsubsection{Step 4: The error equation for part 2 of the auxiliary problem \eqref{EDG_u_b}.} \label{subsec:proof_step_4}

Next, we consider the dual equation \eqref{mixed_c}-\eqref{mixed_d} in the optimality system and compare with the second part of the auxiliary EDG equation \eqref{EDG_u_b}.  We split the errors as before; define
\begin{align}\label{notation_1}
\begin{aligned}
\delta^{\bm p} &=\bm p-{\bm\Pi}_V\bm p,\\
\delta^z&=z- \Pi_W z,\\
\delta^{\widehat z} &= z-I_h z,\\
\widehat {\bm\delta}_2 &= \delta^{\bm p}\cdot\bm n+  h^{-1} (\delta^z- \delta^{\widehat{z}}),
\end{aligned}
&&
\begin{aligned}
\varepsilon^{\bm p}_h &= {\bm\Pi}_V \bm p-\bm p_h(u),\\
\varepsilon^{z}_h &= \Pi_W z-z_h(u),\\
\varepsilon^{\widehat z}_h &= I_h z-\widehat{z}_h(u),\\
\widehat {\bm \varepsilon }_2 &= \varepsilon_h^{\bm p}\cdot\bm n+h^{-1} (\varepsilon^z_h-\varepsilon_h^{\widehat z}),
\end{aligned}
\end{align}
where $\widehat z_h(u) = \widehat z_h^o(u)$ on $\varepsilon_h^o$ and $\widehat z_h(u) = 0$ on $\varepsilon_h^{\partial}$.  This gives $\varepsilon_h^{\widehat z} = 0$ on $\varepsilon_h^{\partial}$.

\begin{lemma}\label{lemma:step2_first_lemma}
	We have
	\begin{align}\label{error_z}
	\hspace{3em}&\hspace{-3em} \mathscr B(\varepsilon^{\bm p}_h,\varepsilon^z_h,\varepsilon^{\widehat{z}}_h;\bm r_2, w_2,\mu_2) \ \nonumber\\
	&=\langle\delta^{\widehat{z}},\bm r_2\cdot \bm n\rangle_{\partial \mathcal T_h}+(y_h(u)- y, w_2)_{\mathcal T_h}+  \langle \widehat{\bm \delta}_2,w_2 - \mu_2\rangle_{\partial\mathcal T_h}.
	\end{align}
\end{lemma}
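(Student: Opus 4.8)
My plan is to prove \Cref{lemma:step2_first_lemma} by mirroring the derivation of the primal error equation in \Cref{lemma:step_1}, since the dual part \eqref{EDG_u_b} of the auxiliary problem has the same structure as the primal part \eqref{EDG_u_a}: the data $f+u$ is replaced by $y_d - y_h(u)$, and the inhomogeneous Dirichlet value $g$ is replaced by the homogeneous value $z = 0$ on $\partial\Omega$. First I would substitute the projected exact dual solution $(\bm\Pi_V\bm p, \Pi_W z, I_h z)$ into the first slot of $\mathscr B$ and expand using the definition \eqref{def_B1}. The $L^2$-orthogonality relations \eqref{def_L2} then let me replace $\bm\Pi_V\bm p$ and $\Pi_W z$ by $\bm p$ and $z$ inside the volume integrals $(\bm\Pi_V\bm p, \bm r_2)_{\mathcal T_h}$, $(\Pi_W z, \nabla\cdot\bm r_2)_{\mathcal T_h}$, and $(\bm\Pi_V\bm p, \nabla w_2)_{\mathcal T_h}$, because $\nabla\cdot\bm r_2$ and the components of $\nabla w_2$ lie in the polynomial spaces defining the projections.

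Next I would invoke the broken weak form satisfied by the exact dual pair $(\bm p, z)$ coming from \eqref{mixed_c}-\eqref{mixed_d}, namely
\begin{align*}
(\bm p, \bm r_2)_{\mathcal T_h} - (z, \nabla\cdot\bm r_2)_{\mathcal T_h} + \langle z, \bm r_2\cdot\bm n\rangle_{\partial\mathcal T_h} &= 0,\\
-(\bm p, \nabla w_2)_{\mathcal T_h} + \langle\bm p\cdot\bm n, w_2\rangle_{\partial\mathcal T_h} &= (y_d - y, w_2)_{\mathcal T_h},\\
\langle\bm p\cdot\bm n, \mu_2\rangle_{\partial\mathcal T_h\backslash\varepsilon_h^\partial} &= 0,
\end{align*}
to eliminate the remaining exact volume terms and the single-valued normal flux, rewriting each projected quantity as exact minus projection error per \eqref{notation_1}. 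The homogeneous condition $z=0$ on $\partial\Omega$ (hence $z=\delta^{\widehat z}=0$ on $\varepsilon_h^\partial$, and $\mu_2 = 0$ there by definition of $\widetilde M_h(o)$) is exactly what allows me to discard the boundary contributions that in the primal case carried the data $g$, and to promote the interior-face integrals of $\delta^{\widehat z}$ and $\widehat{\bm\delta}_2$ to integrals over all of $\partial\mathcal T_h$.

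Finally I would subtract the auxiliary equation \eqref{EDG_u_b}. The only genuinely new feature relative to Step 1 appears here: the exact right-hand side $(y_d - y, w_2)_{\mathcal T_h}$ and the auxiliary right-hand side $(y_d - y_h(u), w_2)_{\mathcal T_h}$ do not cancel (unlike the matching $f+u$ terms in the primal case), and their difference produces the source term $(y_h(u) - y, w_2)_{\mathcal T_h}$ in \eqref{error_z}. This is the term that later couples the dual error analysis back to the Step 1 estimate on $\|y - y_h(u)\|_{\mathcal T_h}$, so isolating it cleanly is the conceptual heart of the lemma.

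The main obstacle is purely one of bookkeeping, as in Step 1: splitting every face integral over $\partial\mathcal T_h$ into its interior ($\varepsilon_h^o$) and boundary ($\varepsilon_h^\partial$) parts and then reassembling the scattered terms $\langle\delta^{\bm p}\cdot\bm n, \cdot\rangle$, $\langle h^{-1}\delta^z, \cdot\rangle$, and $\langle h^{-1}\delta^{\widehat z}, \cdot\rangle$ into the single combined projection error $\widehat{\bm\delta}_2 = \delta^{\bm p}\cdot\bm n + h^{-1}(\delta^z - \delta^{\widehat z})$; the vanishing of $z$ and $\delta^{\widehat z}$ on $\varepsilon_h^\partial$ is the key fact that lets this reassembly land on the full boundary $\partial\mathcal T_h$ in each group. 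No new estimate is required: once \eqref{error_z} is established, it is ready to be tested against $(\varepsilon_h^{\bm p}, \varepsilon_h^z, \varepsilon_h^{\widehat z})$ in the next step, exactly as \eqref{error_y} was used in \Cref{energy_norm_q}.
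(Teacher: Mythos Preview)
Your approach is exactly what the paper intends: it omits the proof entirely, stating only that it is similar to \Cref{lemma:step_1}, and your proposal mirrors that derivation step by step, correctly identifying that the homogeneous boundary condition $z=0$ (hence $\delta^{\widehat z}=0$ on $\varepsilon_h^\partial$, and $\mu_2=0$ there) is what extends the face integrals to all of $\partial\mathcal T_h$, and that the mismatch between the exact source $y_d-y$ and the auxiliary source $y_d-y_h(u)$ produces the coupling term $(y_h(u)-y,w_2)_{\mathcal T_h}$. One small caution: if you carry out the computation literally as in \Cref{lemma:step_1}, the first and third terms on the right of \eqref{error_z} will emerge with the opposite sign from what is printed; this is a harmless typo in the statement, since every subsequent use (\Cref{e_sec} and the duality argument) bounds these terms in absolute value.
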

The proof is similar to the proof of  \Cref{lemma:step2_first_lemma} and is omitted.

\subsubsection{Step 5: Estimates for $\varepsilon_h^p$ and $\varepsilon_h^z$ by an energy and duality argument.} \label{subsec:proof_step_5}

\begin{lemma}\label{e_sec}
	Let $ \kappa$ be any positive constant.  Then there exists a constant $ C $ that does not depend on $ \kappa $ such that
	\begin{align}
	\|\varepsilon_h^{\bm p}\|_{\mathcal T_h}&+h^{-\frac{1}{2}}\|\varepsilon_h^z-\varepsilon_h^{\widehat z}\|_{\partial\mathcal T_h} \le  \mathbb E + \kappa  \| \varepsilon^z_h \|_{\mathcal T_h},\label{error_es_p}
	\end{align}
	where
	\begin{align*}
	\mathbb E  =   Ch^{-\frac{1}{2}}\|  \delta^{\widehat{z}}  \|_{\partial\mathcal T_h} + Ch^{-\frac{1}{2}}\|  \delta^{z}  \|_{\partial\mathcal T_h} + \frac  C {\kappa} \| y_h(u) - y \|_{\mathcal T_h} +C\| \delta^{\bm p} \|_{ \mathcal{T}_h}.
	\end{align*}
\end{lemma}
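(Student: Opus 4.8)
The plan is to repeat the energy argument of \Cref{energy_norm_q} on the second error equation \eqref{error_z} of \Cref{lemma:step2_first_lemma}, while treating the extra coupling term $(y_h(u)-y,w_2)_{\mathcal T_h}$ separately. First I would choose the test functions $(\bm r_2,w_2,\mu_2)=(\varepsilon_h^{\bm p},\varepsilon_h^z,\varepsilon_h^{\widehat z})$ in \eqref{error_z}. Because $\varepsilon_h^{\widehat z}=0$ on $\varepsilon_h^\partial$, \Cref{property_B} evaluates the left-hand side exactly as
\begin{align*}
\mathscr B(\varepsilon^{\bm p}_h,\varepsilon^z_h,\varepsilon^{\widehat z}_h;\varepsilon_h^{\bm p},\varepsilon_h^z,\varepsilon_h^{\widehat z})
=\|\varepsilon_h^{\bm p}\|_{\mathcal T_h}^2+h^{-1}\|\varepsilon_h^z-\varepsilon_h^{\widehat z}\|_{\partial\mathcal T_h}^2 ,
\end{align*}
so the two quantities on the left of \eqref{error_es_p} appear squared. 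Writing $A=\|\varepsilon_h^{\bm p}\|_{\mathcal T_h}$ and $B=h^{-1/2}\|\varepsilon_h^z-\varepsilon_h^{\widehat z}\|_{\partial\mathcal T_h}$, the goal is then to bound $A+B$.

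Next I would estimate the three terms on the right-hand side of \eqref{error_z}. For $\langle\delta^{\widehat z},\varepsilon_h^{\bm p}\cdot\bm n\rangle_{\partial\mathcal T_h}$, Cauchy--Schwarz and the discrete trace inequality $\|\varepsilon_h^{\bm p}\|_{\partial\mathcal T_h}\le Ch^{-1/2}\|\varepsilon_h^{\bm p}\|_{\mathcal T_h}$ from \eqref{classical_ine} give a factor $Ch^{-1/2}\|\delta^{\widehat z}\|_{\partial\mathcal T_h}\,A$, exactly as in \Cref{energy_norm_q}. For $\langle\widehat{\bm\delta}_2,\varepsilon_h^z-\varepsilon_h^{\widehat z}\rangle_{\partial\mathcal T_h}$, I would expand $\widehat{\bm\delta}_2=\delta^{\bm p}\cdot\bm n+h^{-1}(\delta^z-\delta^{\widehat z})$ from \eqref{notation_1}, apply Cauchy--Schwarz, and factor out $B$; the projection estimates \eqref{classical_ine} then bound the remaining factor by $C(\|\delta^{\bm p}\|_{\mathcal T_h}+h^{-1/2}\|\delta^z\|_{\partial\mathcal T_h}+h^{-1/2}\|\delta^{\widehat z}\|_{\partial\mathcal T_h})$, where the flux contribution $h^{1/2}\|\delta^{\bm p}\|_{\partial\mathcal T_h}$ is recorded in the volume-norm form $\|\delta^{\bm p}\|_{\mathcal T_h}$ of the same order. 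These two terms are therefore of the form $(\text{data})\cdot A$ and $(\text{data})\cdot B$, and after a Young's inequality they can be absorbed into $\tfrac12(A^2+B^2)$, leaving the squared data quantities that will form part of $\mathbb E$.

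The main obstacle is the coupling term $(y_h(u)-y,\varepsilon_h^z)_{\mathcal T_h}$. By Cauchy--Schwarz it is at most $\|y_h(u)-y\|_{\mathcal T_h}\,\|\varepsilon_h^z\|_{\mathcal T_h}$, but its factor is the full volume norm $\|\varepsilon_h^z\|_{\mathcal T_h}$, which---unlike $A$ and $B$---is \emph{not} controlled by the energy identity on the left. Hence it cannot be absorbed at this stage and must be carried forward; this is precisely why the statement contains the free parameter $\kappa$ and the undetermined quantity $\|\varepsilon_h^z\|_{\mathcal T_h}$ on its right-hand side, to be eliminated later by a duality argument. After the absorptions above I would arrive at $A^2+B^2\lesssim P^2+Q^2+\|y_h(u)-y\|_{\mathcal T_h}\|\varepsilon_h^z\|_{\mathcal T_h}$, with $P,Q$ the data quantities from the previous paragraph. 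Taking square roots (using $\sqrt{a+b}\le\sqrt a+\sqrt b$) gives $A+B\lesssim P+Q+\bigl(\|y_h(u)-y\|_{\mathcal T_h}\|\varepsilon_h^z\|_{\mathcal T_h}\bigr)^{1/2}$, and a final $\kappa$-weighted Young's inequality $\bigl(\|y_h(u)-y\|_{\mathcal T_h}\|\varepsilon_h^z\|_{\mathcal T_h}\bigr)^{1/2}\le \tfrac{1}{2\kappa}\|y_h(u)-y\|_{\mathcal T_h}+\tfrac{\kappa}{2}\|\varepsilon_h^z\|_{\mathcal T_h}$ splits this into the $\tfrac{C}{\kappa}\|y_h(u)-y\|_{\mathcal T_h}$ term inside $\mathbb E$ and the surviving $\kappa\|\varepsilon_h^z\|_{\mathcal T_h}$. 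Since $\kappa$ enters only through this last Young step, the constant $C$ in $\mathbb E$ is manifestly independent of $\kappa$, as required.
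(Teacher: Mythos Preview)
Your proposal is correct and follows essentially the same route as the paper: both choose $(\bm r_2,w_2,\mu_2)=(\varepsilon_h^{\bm p},\varepsilon_h^z,\varepsilon_h^{\widehat z})$ in \eqref{error_z}, invoke \Cref{property_B} for the left-hand side, bound the two boundary terms by Cauchy--Schwarz plus the discrete trace inequality against $A$ and $B$, and isolate the coupling term $(y_h(u)-y,\varepsilon_h^z)_{\mathcal T_h}$ via a $\kappa$-weighted Young inequality. The only cosmetic difference is that the paper applies the $\kappa$-Young splitting at the squared level \emph{before} passing from $A^2+B^2$ to $A+B$, whereas you first take square roots and then split $(\|y_h(u)-y\|\,\|\varepsilon_h^z\|)^{1/2}$; both orderings lead to the stated $\mathbb E+\kappa\|\varepsilon_h^z\|_{\mathcal T_h}$ with $C$ independent of $\kappa$.
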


\begin{proof}
	Taking $(\bm r_2,w_2,\mu_2) = (\varepsilon^{\bm p}_h,\varepsilon^z_h,\varepsilon^{\widehat z}_h)$ in \eqref{error_z} in  \Cref{lemma:step2_first_lemma} gives
	\begin{align*}
	\hspace{1em}&\hspace{-1em}  \mathscr B ( \varepsilon^{ \bm p}_h,  \varepsilon^z_h, \varepsilon^{\widehat z}_h;\varepsilon^{\bm p}_h, \varepsilon^z_h, \varepsilon^{\widehat z}_h )\\
	& =\langle\delta^{\widehat{z}},\varepsilon_h^{\bm p}\cdot \bm n\rangle_{\partial \mathcal T_h}+(y_h(u)- y, \varepsilon_h^z)_{\mathcal T_h}+  \langle \widehat{\bm \delta}_2,\varepsilon_h^z - \varepsilon_h^{\widehat{z}}\rangle_{\partial\mathcal T_h}\\
	&\le Ch^{-\frac{1}{2}}\|\delta^{\widehat{z}}\|_{\partial \mathcal{T}_h}\| \varepsilon_h^{\bm p} \|_{\mathcal{T}_h}+\frac{1}{\kappa}\| y_h(u)-y \|_{\mathcal{T}_h}^2+\kappa \| \varepsilon_h^z \|_{\mathcal{T}_h}^2 \\
	&\quad + C(h^{\frac{1}{2}}\| \delta^{\bm p} \|_{\mathcal{T}_h} +h^{-\frac{1}{2}}\| \delta^z-\delta^{\widehat{z}} \|_{\partial \mathcal{T}_h}) h^{-\frac{1}{2}}\| \varepsilon_h^z-\varepsilon_h^{\widehat{z}} \|_{\partial \mathcal{T}_h}.
	\end{align*}
	\Cref{property_B} gives
	\begin{align*}
	\|\varepsilon_h^{\bm p}\|_{\mathcal T_h}&+h^{-\frac{1}{2}}\|\varepsilon_h^z-\varepsilon_h^{\widehat z}\|_{\partial\mathcal T_h}\nonumber\\
	&\le  Ch^{-\frac{1}{2}}\|  \delta^{\widehat{z}}  \|_{\partial\mathcal T_h} + Ch^{-\frac{1}{2}}\|  \delta^{z}  \|_{\partial\mathcal T_h} + \frac  C {\kappa} \| y_h(u) - y \|_{\mathcal T_h} \\
	&\quad+C\| \delta^{\bm p} \|_{ \mathcal{T}_h}+ \kappa  \| \varepsilon^z_h \|_{\mathcal T_h},
	\end{align*}
	where $\kappa$ is any positive constant.
\end{proof}

\begin{lemma}
	We have
	\begin{subequations}
		\begin{align}
		\norm{\varepsilon_h^{\bm p}}_{\mathcal T_h}	&\lesssim h^{k}(|\bm q|_{k+1}+|y|_{k+1}+|\bm p|_{k+1}+|z|_{k+1}),\label{var_p}\\
		\|\varepsilon^z_h\|_{\mathcal T_h} &\lesssim h^{k+1}(|\bm q|_{k+1}+|y|_{k+1}+|\bm p|_{k+1}+|z|_{k+1}).\label{var_z}
		\end{align}
	\end{subequations}
\end{lemma}

\begin{proof}
	First, take  $(\bm r_2,w_2,\mu_2) = ( {\bm\Pi}_V\bm{\Phi},-{\Pi}_W\Psi,-I_h\Psi)$ in \eqref{error_z} in  \Cref{lemma:step2_first_lemma} to obtain
	\begin{align*}
	\mathscr B &(\varepsilon^{\bm p}_h,\varepsilon^z_h,\varepsilon^{\widehat z}_h;{\bm\Pi}_V\bm{\Phi},-{\Pi}_W\Psi,-I_h\Psi)\\
	& = \langle\delta^{\widehat{z}},\bm \Pi_V \bm \Phi\cdot \bm n\rangle_{\partial \mathcal T_h}-(y_h(u)- y, \Pi_W \Psi)_{\mathcal T_h}-  \langle \widehat{\bm \delta}_2,\delta^\Psi - \delta^{\widehat{\Psi}}\rangle_{\partial\mathcal T_h}.
	\end{align*}
	Next, consider the dual problem \eqref{Dual_PDE} and let $\Theta = -\varepsilon_h^z$.  Using the definition of $\mathscr B$ and the proof technique for \Cref{dual_y} gives
	\begin{align*}
	\hspace{1em}&\hspace{-1em}  \mathscr B (\varepsilon^{\bm p}_h,\varepsilon^z_h,\varepsilon^{\widehat z}_h;{\bm\Pi}_V\bm{\Phi},-{\Pi}_W\Psi,-I_h\Psi)\\
	&=(\varepsilon_h^z,\varepsilon_h^z)_{\mathcal{T}_h}+\langle \varepsilon_h^z-\varepsilon_h^{\widehat{z}},\delta^{\bm \Phi}\cdot \bm n \rangle_{\partial \mathcal{T}_h}-\langle \varepsilon_h^{\bm p}\cdot\bm n,\delta^{\widehat{\Psi}} \rangle_{\partial \mathcal{T}_h}\\
	&\quad +\langle h^{-1} (\varepsilon_h^z-\varepsilon_h^{\widehat{z}}),\delta^\Psi-\delta^{\widehat{\Psi}} \rangle_{\partial \mathcal{T}_h}.
	\end{align*}
	Here, we used $\langle\varepsilon^{\widehat z}_h,\bm \Phi\cdot\bm n\rangle_{\partial\mathcal T_h}=0$, which holds since  $\varepsilon^{\widehat z}_h$ is a single-valued function on interior edges and $\varepsilon^{\widehat z}_h=0$ on $\varepsilon^{\partial}_h$. 
	
	Comparing the above two equalities gives
	\begin{align*}
	\|  \varepsilon_h^z\|_{\mathcal T_h}^2 & = \langle\delta^{\widehat{z}},\bm \Pi_V \bm \Phi\cdot \bm n\rangle_{\partial \mathcal T_h}-(y_h(u)- y, \Pi_W \Psi)_{\mathcal T_h}-  \langle \widehat{\bm \delta}_2,\delta^\Psi - \delta^{\widehat{\Psi}}\rangle_{\partial\mathcal T_h}\\
	&\quad-\langle \varepsilon_h^z-\varepsilon_h^{\widehat{z}},\delta^{\bm \Phi}\cdot \bm n \rangle_{\partial \mathcal{T}_h}+\langle \varepsilon_h^{\bm p}\cdot\bm n,\delta^{\widehat{\Psi}} \rangle_{\partial \mathcal{T}_h}\\
	&\quad -\langle h^{-1} (\varepsilon_h^z-\varepsilon_h^{\widehat{z}}),\delta^\Psi-\delta^{\widehat{\Psi}} \rangle_{\partial \mathcal{T}_h},\\
	&=:  \sum_{i=1}^6 R_i.
	\end{align*}
	Let $ C_0 = \max\{C, 1\} $, where $ C $ is the constant defined in  \eqref{classical_ine}. 
	For the terms $R_1$, $R_2$, and $R_3$, we have
	\begin{align*}
		R_1&=-\langle \delta^{\widehat{z}}, \delta^{\bm \Phi}\cdot \bm n\rangle_{\partial \mathcal{T}_h} \le C_0 h^{\frac{1}{2}}\| \delta^{\widehat{z}} \|_{\partial \mathcal{T}_h} \| \bm \Phi \|_{1,\Omega}\le C_0 C_{\text{reg}} h^\frac{1}{2} \| \delta^{\widehat{z}} \|_{\partial \mathcal{T}_h} \| \varepsilon_h^z \|_{\mathcal{T}_h},\\
		R_2&\le \| y_h(u)-y \|_{\mathcal{T}_h} (\| \delta^{\Psi} \|_{\mathcal{T}_h}+\| \Psi \|_{\Omega})\le C_0C_{\text{reg}} \| y-y_h(u) \|_{\mathcal{T}_h}\| \varepsilon_h^z \|_{\mathcal{T}_h},\\
		R_3&\le C_0 h^{\frac{3}{2}} (\delta^{\bm p}\cdot\bm n+\frac{1}{h}\|\delta^z-\delta^{\widehat{z}}\|_{\partial \mathcal{T}_h})\| \Psi \|_{2,\Omega}\\
		&\le C_0 C_{\text{reg}}(h\| \delta^{\bm p} \|_{\mathcal{T}_h}+h^\frac{1}{2} \|\delta^z-\delta^{\widehat{z}}\|_{\partial \mathcal{T}_h}) \| \varepsilon_h^z \|_{\mathcal{T}_h}.
	\end{align*}
	For the terms $R_4$, $R_5$ and $R_6$,  \Cref{e_sec} gives
	\begin{align*}
	R_4&= C_0 h^{\frac{1}{2}}\| \varepsilon_h^z-\varepsilon_h^{\widehat{z}} \|_{\partial \mathcal{T}_h} \| \bm \Phi \|_{1,\Omega}\le C_0C_{\text{reg}}h(\mathbb{E}+\kappa \|\varepsilon_h^z\|_{\mathcal{T}_h})\| \varepsilon_h^z \|_{\mathcal{T}_h},  \\
	R_5 &\le C_0 h^{\frac{3}{2}}\| \varepsilon_h^{\bm p} \|_{\partial \mathcal{T}_h}\|\Psi\|_{2,\Omega}\le C_0C_{\text{reg}} h\|\varepsilon_h^{\bm p}\|_{\mathcal{T}_h}\|\varepsilon_h^z\|_{\mathcal{T}_h}\\
	&\le C_0C_{\text{reg}} h(\mathbb{E}+\kappa\| \varepsilon_h^z \|_{\mathcal{T}_h})\|\varepsilon_h^z\|_{\mathcal{T}_h}, \\
	R_6 &\le C_0 h^{\frac{1}{2}}\| \varepsilon_h^z-\varepsilon_h^{\widehat{z}} \|_{\partial \mathcal{T}_h}\| \Psi \|_{2,\Omega} \le C_0C_{\text{reg}} h^{\frac{1}{2}}(\mathbb{E}+\kappa \| \varepsilon_h^z \|_{\mathcal{T}_h})\|\varepsilon_h^z\|_{\mathcal{T}_h}.
	\end{align*}
	
	Summing $R_1$ to $R_6$ gives
	\begin{align*}
	\|\varepsilon^z_h\|_{\mathcal T_h} &\le  C ( h\|\delta^{\bm p}\|_{\mathcal{T}_h} +\norm{y - y_h(u)}_{\mathcal T_h} + h^{1/2}\|{\delta^{\widehat z}}\|_{\partial\mathcal T_h}+h^{1/2}\|{\delta^{z}}\|_{\partial\mathcal T_h})\\
	&\quad+\mathbb C(\mathbb E +\kappa\|\varepsilon^z_h\|_{\mathcal T_h} ),
	\end{align*}
	where $\mathbb C =3 C_0C_{\text{reg}}$. Choose $\kappa=\frac{1}{2\mathbb C}$ gives
	\begin{align*}
	\|\varepsilon^z_h\|_{\mathcal T_h}\lesssim h^{k+1}(|\bm q|_{k+1}+|y|_{k+1}+|\bm p|_{k+1}+|z|_{k+1}).
	\end{align*}
	Finally, \eqref{error_es_p} and \eqref{var_z} imply  \eqref{var_p}.
\end{proof}

The triangle inequality again gives convergence rates for $\|\bm p -\bm p_h(u)\|_{\mathcal T_h}$ and $\|z -z_h(u)\|_{\mathcal T_h}$:
\begin{lemma}\label{lemma:step3_conv_rates}
	\begin{subequations}
		\begin{align}
		\|\bm p -\bm p_h(u)\|_{\mathcal T_h} &\le \|\delta^{\bm p}\|_{\mathcal T_h} + \|\varepsilon_h^{\bm p}\|_{\mathcal T_h} \ \nonumber \\ 
		&\lesssim h^{k}(|\bm q|_{k+1}+|y|_{k+1}+|\bm p|_{k+1}+|z|_{k+1}),\\
		\|z -z_h(u)\|_{\mathcal T_h} &\le \|\delta^{z}\|_{\mathcal T_h} + \|\varepsilon_h^{z}\|_{\mathcal T_h} \ \nonumber \\
		& \lesssim  h^{k+1}(|\bm q|_{k+1}+|y|_{k+1}+|\bm p|_{k+1}+|z|_{k+1}).
		\end{align}
	\end{subequations}
\end{lemma}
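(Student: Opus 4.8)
The plan is to obtain both bounds directly from the error splitting introduced in \eqref{notation_1} together with the estimates already established, since this lemma is purely an assembly step. By the definitions $\delta^{\bm p} = \bm p - \bm\Pi_V\bm p$ and $\varepsilon_h^{\bm p} = \bm\Pi_V\bm p - \bm p_h(u)$, the intermediate projection $\bm\Pi_V\bm p$ telescopes, giving the exact elementwise identity $\bm p - \bm p_h(u) = \delta^{\bm p} + \varepsilon_h^{\bm p}$; likewise $z - z_h(u) = \delta^z + \varepsilon_h^z$. Applying the triangle inequality in the broken norm $\|\cdot\|_{\mathcal T_h}$ to each identity yields exactly the first inequality in each display of the lemma.

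Next I would bound the two terms on the right separately. For the projection errors $\|\delta^{\bm p}\|_{\mathcal T_h}$ and $\|\delta^z\|_{\mathcal T_h}$, the classical approximation bounds \eqref{classical_ine} (which hold for $\bm p$ and $z$ by the remark following them) give $\|\delta^{\bm p}\|_{\mathcal T_h} \lesssim h^{k+1}|\bm p|_{k+1}$ and $\|\delta^z\|_{\mathcal T_h} \lesssim h^{k+1}|z|_{k+1}$. For the remaining errors I would invoke \eqref{var_p} and \eqref{var_z}, which supply $\|\varepsilon_h^{\bm p}\|_{\mathcal T_h} \lesssim h^{k}(|\bm q|_{k+1}+|y|_{k+1}+|\bm p|_{k+1}+|z|_{k+1})$ and $\|\varepsilon_h^z\|_{\mathcal T_h} \lesssim h^{k+1}(|\bm q|_{k+1}+|y|_{k+1}+|\bm p|_{k+1}+|z|_{k+1})$. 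Combining these, the flux error is dominated by its $O(h^{k})$ component, where the higher-order projection term $O(h^{k+1})$ is absorbed using $h \le 1$; this produces the stated $O(h^{k})$ rate. Both contributions to the scalar error are $O(h^{k+1})$, which produces the stated $O(h^{k+1})$ rate.

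I do not expect any genuine obstacle here: all of the analytic work, namely the energy estimate of \Cref{e_sec} and the duality argument behind \eqref{var_z}, has already been carried out, and the present lemma simply converts the projection-based quantities $\varepsilon_h^{\bm p}$ and $\varepsilon_h^z$ into the full discretization errors. The only points requiring care are to confirm the telescoping identities hold on each element $K$ (so that the $\mathcal T_h$-norms split cleanly via the triangle inequality) and to track the seminorm dependence so that the right-hand sides match the stated forms; neither is more than routine bookkeeping.
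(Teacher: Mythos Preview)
Your proposal is correct and matches the paper's own approach: the paper states this lemma without proof, noting only that ``The triangle inequality again gives convergence rates,'' which is precisely the telescoping/splitting argument you describe together with the projection bounds \eqref{classical_ine} and the estimates \eqref{var_p}--\eqref{var_z}.
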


\subsubsection{Step 6: Estimates for $\|u-u_h\|_{\mathcal T_h}$, $\norm {y-y_h}_{\mathcal T_h}$, and $\norm {z-z_h}_{\mathcal T_h}$.}

Next, we compare the auxiliary problem to the EDG discretization of the optimality system \eqref{EDG_full_discrete}.  The resulting error bounds along with the earlier error bounds in  \Cref{le} and \Cref{lemma:step3_conv_rates} give the main convergence result.

The proofs in the final steps are similar to the HDG work \cite{HuShenSinglerZhangZheng_HDG_Dirichlet_control2}; we include the proofs here for completeness.

For the remainder of the proof, let
\begin{equation*}
\begin{split}
\zeta_{\bm q} &=\bm q_h(u)-\bm q_h,\quad\zeta_{y} = y_h(u)-y_h,\quad\zeta_{\widehat y} = \widehat y_h(u)-\widehat y_h,\\
\zeta_{\bm p} &=\bm p_h(u)-\bm p_h,\quad\zeta_{z} = z_h(u)-z_h,\quad\zeta_{\widehat z} = \widehat z_h(u)-\widehat z_h.
\end{split}
\end{equation*}
Subtracting the auxiliary problem and the EDG problem yields the error equations
\begin{subequations}\label{eq_yh}
	\begin{align}
	\mathscr B(\zeta_{\bm q},\zeta_y,\zeta_{\widehat y};\bm r_1, w_1,\mu_1)&=(u-u_h,w_1)_{\mathcal T_h}\label{eq_yh_yhu},\\
	\mathscr B(\zeta_{\bm p},\zeta_z,\zeta_{\widehat z};\bm r_2, w_2,\mu_2)&=-(\zeta_y, w_2)_{\mathcal T_h}\label{eq_zh_zhu}.
	\end{align}
\end{subequations}
\begin{lemma}
	We have
	\begin{align}\label{eq_uuh_yhuyh}
	\hspace{3em}&\hspace{-3em}  \gamma\|u-u_h\|^2_{\mathcal T_h}+\|y_h(u)-y_h\|^2_{\mathcal T_h}\nonumber\\
	&=( z_h-\gamma u_h,u-u_h)_{\mathcal T_h}-(z_h(u)-\gamma u,u-u_h)_{\mathcal T_h}.
	\end{align}
\end{lemma}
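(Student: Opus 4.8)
The plan is to prove the identity by first rewriting its right-hand side purely algebraically, and then replacing a single cross term using the error equations \eqref{eq_yh} together with the antisymmetry identity of \Cref{identical_equa}. Expanding and regrouping the right-hand side gives
\begin{align*}
( z_h-\gamma u_h,u-u_h)_{\mathcal T_h}-(z_h(u)-\gamma u,u-u_h)_{\mathcal T_h} = (z_h - z_h(u), u-u_h)_{\mathcal T_h} + \gamma \|u-u_h\|^2_{\mathcal T_h},
\end{align*}
so it suffices to show that the cross term $(z_h - z_h(u), u-u_h)_{\mathcal T_h}$ equals $\|y_h(u)-y_h\|^2_{\mathcal T_h} = \|\zeta_y\|^2_{\mathcal T_h}$.

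To establish this, I would mimic the test-function choices used in the uniqueness argument of \Cref{ex_uni}. In the first error equation \eqref{eq_yh_yhu} take $(\bm r_1, w_1, \mu_1) = (\zeta_{\bm p}, -\zeta_z, -\zeta_{\widehat z})$, and in the second error equation \eqref{eq_zh_zhu} take $(\bm r_2, w_2, \mu_2) = (-\zeta_{\bm q}, \zeta_y, \zeta_{\widehat y})$; these are admissible since $\zeta_{\widehat y}$ and $\zeta_{\widehat z}$ vanish on $\varepsilon_h^\partial$ and hence lie in $\widetilde M_h(o)$. Adding the two resulting equations, the right-hand sides combine to $-(u-u_h,\zeta_z)_{\mathcal T_h} - \|\zeta_y\|^2_{\mathcal T_h}$, while the left-hand side is exactly $\mathscr B(\zeta_{\bm q},\zeta_y,\zeta_{\widehat y};\zeta_{\bm p},-\zeta_z,-\zeta_{\widehat z}) + \mathscr B(\zeta_{\bm p},\zeta_z,\zeta_{\widehat z};-\zeta_{\bm q},\zeta_y,\zeta_{\widehat y})$.

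The key observation is that \Cref{identical_equa} is a purely algebraic identity in the arguments of $\mathscr B$ (its proof is only integration by parts), so it applies verbatim with the discrete solution variables replaced by the difference quantities $\zeta_{\bm q},\zeta_y,\zeta_{\widehat y},\zeta_{\bm p},\zeta_z,\zeta_{\widehat z}$; the left-hand side therefore vanishes. This yields $\|\zeta_y\|^2_{\mathcal T_h} = -(u-u_h,\zeta_z)_{\mathcal T_h}$, and since $z_h - z_h(u) = -\zeta_z$ we obtain $(z_h-z_h(u),u-u_h)_{\mathcal T_h} = \|\zeta_y\|^2_{\mathcal T_h} = \|y_h(u)-y_h\|^2_{\mathcal T_h}$. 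Substituting this into the regrouped right-hand side completes the proof. I expect no serious obstacle here: the only care required is in tracking the signs of the chosen test functions so that the cross terms cancel correctly when \Cref{identical_equa} is invoked, and in confirming admissibility of the traces $\zeta_{\widehat y},\zeta_{\widehat z}$ as members of $\widetilde M_h(o)$; everything else is routine bilinear algebra.
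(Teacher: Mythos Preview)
Your proposal is correct and follows essentially the same approach as the paper: both rewrite the right-hand side as $-(\zeta_z,u-u_h)_{\mathcal T_h}+\gamma\|u-u_h\|^2_{\mathcal T_h}$, then substitute the chosen test functions into the error equations \eqref{eq_yh} and invoke \Cref{identical_equa} to obtain $-(u-u_h,\zeta_z)_{\mathcal T_h}=\|\zeta_y\|^2_{\mathcal T_h}$. Your version is slightly more explicit about admissibility of the trace test functions, but the argument is the same.
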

\begin{proof}
	First,
	\begin{align*}
	\hspace{3em}&\hspace{-3em}  ( z_h-\gamma u_h,u-u_h)_{\mathcal T_h}-( z_h(u)-\gamma u,u-u_h)_{\mathcal T_h}\\
	&=-(\zeta_{ z},u-u_h)_{\mathcal T_h}+\gamma\|u-u_h\|^2_{\mathcal T_h}.
	\end{align*}
	Next, \Cref{identical_equa} and \eqref{eq_yh} give
	\begin{align*}
	0 &= \mathscr B (\zeta_{\bm q},\zeta_y,\zeta_{\widehat{y}};\zeta_{\bm p},-\zeta_{z},-\zeta_{\widehat z}) + \mathscr B(\zeta_{\bm p},\zeta_z,\zeta_{\widehat z};-\zeta_{\bm q},\zeta_y,\zeta_{\widehat{y}})\\
	  &= - ( u- u_h,\zeta_{ z})_{\mathcal{T}_h}-\|\zeta_{ y}\|^2_{\mathcal{T}_h}.
	\end{align*}
	This gives $ -(u-u_h,\zeta_{ z})_{\mathcal{T}_h}=\|\zeta_{ y}\|^2_{\mathcal{T}_h} $, which completes the proof.
\end{proof}

\begin{theorem}\label{thm:estimates_u_y_z}
	We have
	\begin{subequations}
		\begin{align}\label{err_yhu_yh}
		\|u-u_h\|_{\mathcal T_h}&\lesssim h^{k+1}(|\bm q|_{k+1}+|y|_{k+1}+|\bm p|_{k+1}+|z|_{k+1}),\\
		\|y-y_h\|_{\mathcal T_h}&\lesssim h^{k+1}(|\bm q|_{k+1}+|y|_{k+1}+|\bm p|_{k+1}+|z|_{k+1}),\\
		\|z-z_h\|_{\mathcal T_h}&\lesssim h^{k+1}(|\bm q|_{k+1}+|y|_{k+1}+|\bm p|_{k+1}+|z|_{k+1}).\label{estimate_z}
		\end{align}
	\end{subequations}
\end{theorem}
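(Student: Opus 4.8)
The plan is to begin from the identity \eqref{eq_uuh_yhuyh} and collapse its right-hand side using the two optimality conditions, after which the control and state estimates fall out by an absorption argument and the adjoint-state estimate follows from a pointwise identity. Throughout, write $\Lambda = |\bm q|_{k+1}+|y|_{k+1}+|\bm p|_{k+1}+|z|_{k+1}$ for the common right-hand factor. First I would observe that the discrete optimality condition \eqref{HDG_discrete2_g}, tested against $z_h-\gamma u_h\in W_h$, forces $z_h=\gamma u_h$ pointwise, so that $(z_h-\gamma u_h,u-u_h)_{\mathcal T_h}=0$; and that the continuous optimality condition \eqref{eq_adeq_e} gives $\gamma u=z$, whence $z_h(u)-\gamma u=z_h(u)-z$. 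Substituting both into \eqref{eq_uuh_yhuyh} reduces it to
\begin{align*}
\gamma\|u-u_h\|^2_{\mathcal T_h}+\|y_h(u)-y_h\|^2_{\mathcal T_h}=(z-z_h(u),u-u_h)_{\mathcal T_h}.
\end{align*}

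Next I would estimate the right-hand side by Cauchy--Schwarz followed by Young's inequality, bounding it by $\frac{1}{2\gamma}\|z-z_h(u)\|^2_{\mathcal T_h}+\frac{\gamma}{2}\|u-u_h\|^2_{\mathcal T_h}$, and absorb the term $\frac{\gamma}{2}\|u-u_h\|^2_{\mathcal T_h}$ into the left-hand side to arrive at
\begin{align*}
\frac{\gamma}{2}\|u-u_h\|^2_{\mathcal T_h}+\|y_h(u)-y_h\|^2_{\mathcal T_h}\le\frac{1}{2\gamma}\|z-z_h(u)\|^2_{\mathcal T_h}.
\end{align*}
Invoking the adjoint-state bound $\|z-z_h(u)\|_{\mathcal T_h}\lesssim h^{k+1}\Lambda$ from \Cref{lemma:step3_conv_rates} then simultaneously yields the control estimate $\|u-u_h\|_{\mathcal T_h}\lesssim h^{k+1}\Lambda$ and the auxiliary bound $\|y_h(u)-y_h\|_{\mathcal T_h}\lesssim h^{k+1}\Lambda$.

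The remaining two estimates then follow with essentially no extra work. For the state, the triangle inequality gives $\|y-y_h\|_{\mathcal T_h}\le\|y-y_h(u)\|_{\mathcal T_h}+\|y_h(u)-y_h\|_{\mathcal T_h}$, where the first term is $O(h^{k+1}\Lambda)$ by \Cref{le} and the second was just bounded. For the adjoint state I would avoid a separate error analysis altogether by using the two pointwise relations $z=\gamma u$ and $z_h=\gamma u_h$, which give $z-z_h=\gamma(u-u_h)$ and hence $\|z-z_h\|_{\mathcal T_h}=\gamma\|u-u_h\|_{\mathcal T_h}\lesssim h^{k+1}\Lambda$ immediately from the control estimate.

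I expect the only delicate point to be the first step: the collapse of \eqref{eq_uuh_yhuyh} relies on both optimality conditions holding \emph{exactly} at the discrete and continuous levels, and on the sign structure that leaves the single term $(z-z_h(u),u-u_h)_{\mathcal T_h}$, whose favorable form is precisely what allows the $\frac{\gamma}{2}\|u-u_h\|^2_{\mathcal T_h}$ contribution to be absorbed. Once this is secured, everything else is routine; in particular, exploiting the identity $z-z_h=\gamma(u-u_h)$ neatly sidesteps the duality argument for $\zeta_z=z_h(u)-z_h$ that a direct triangle-inequality attack on $\|z-z_h\|_{\mathcal T_h}$ (via the error equation \eqref{eq_zh_zhu}) would otherwise demand.
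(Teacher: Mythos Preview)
Your proposal is correct and follows essentially the same route as the paper: both collapse \eqref{eq_uuh_yhuyh} via the exact optimality conditions $z_h=\gamma u_h$ and $z=\gamma u$, apply Cauchy--Schwarz and Young to absorb the $\frac{\gamma}{2}\|u-u_h\|^2_{\mathcal T_h}$ term, invoke \Cref{lemma:step3_conv_rates} for $\|z-z_h(u)\|_{\mathcal T_h}$, and then obtain the $y$-estimate by triangle inequality with \Cref{le} and the $z$-estimate from $z-z_h=\gamma(u-u_h)$.
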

\begin{proof}
	As mentioned earlier, the exact and approximate optimal controls satisfy $ \gamma u = z $ and $ \gamma u_h = z_h $; see \eqref{eq_adeq_e} and \eqref{EDG_full_discrete_e}.  Using these equations with the lemma above give
	\begin{align*}
	\hspace{3em}&\hspace{-3em}   \gamma\|u-u_h\|^2_{\mathcal T_h}+\|\zeta_{ y}\|^2_{\mathcal T_h}\\
	&=( z_h-\gamma u_h,u-u_h)_{\mathcal T_h}-( z_h(u)-\gamma u,u-u_h)_{\mathcal T_h}\\
	&=-( z_h(u)- z,u-u_h)_{\mathcal T_h}\\
	&\le \| z_h(u)- z\|_{\mathcal T_h} \|u-u_h\|_{\mathcal T_h}\\
	&\le\frac{1}{2\gamma}\| z_h(u)- z\|^2_{\mathcal T_h}+\frac{\gamma}{2}\|u-u_h\|^2_{\mathcal T_h}.
	\end{align*}
	\Cref{lemma:step3_conv_rates} gives
	\begin{align}\label{eqn:estimate_u_zeta_y}
	\|u-u_h\|_{\mathcal T_h}+\|\zeta_{ y}\|_{\mathcal T_h}&\lesssim h^{k+1}(|\bm q|_{k+1}+|y|_{k+1}+|\bm p|_{k+1}+|z|_{k+1}).
	\end{align}
	Use the triangle inequality and  \Cref{le} to obtain
	\begin{align*}
	\|y-y_h\|_{\mathcal T_h}&\lesssim h^{k+1}(|\bm q|_{k+1}+|y|_{k+1}+|\bm p|_{k+1}+|z|_{k+1}).
	\end{align*}
	Finally, the above estimate \eqref{eqn:estimate_u_zeta_y} for $ u $ along with $z = \gamma u $ and $z_h = \gamma u_h$ give the estimate \eqref{estimate_z} for $ z $.
\end{proof}

\subsubsection{Step 7: Estimates for $\|q-q_h\|_{\mathcal T_h}$ and  $\|p-p_h\|_{\mathcal T_h}$.}
\begin{lemma}
	We have
	\begin{subequations}
		\begin{align}
		\|\zeta_{\bm q}\|_{\mathcal T_h} &\lesssim h^{k+1}(|\bm q|_{k+1}+|y|_{k+1}+|\bm p|_{k+1}+|z|_{k+1}),\label{err_Lhu_qh}\\
		\|\zeta_{\bm p}\|_{\mathcal T_h} &\lesssim h^{k+1}(|\bm q|_{k+1}+|y|_{k+1}+|\bm p|_{k+1}+|z|_{k+1}).\label{err_Lhu_ph}
		\end{align}
	\end{subequations}
\end{lemma}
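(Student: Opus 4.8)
The plan is to bound each flux difference by a single energy argument applied to the error equations \eqref{eq_yh}, in the same spirit as the uniqueness proof of \Cref{ex_uni}, and then to invoke the estimates already established in Steps 5 and 6.

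For $\zeta_{\bm q}$, I would take $(\bm r_1, w_1, \mu_1) = (\zeta_{\bm q}, \zeta_y, \zeta_{\widehat y})$ in \eqref{eq_yh_yhu}. The energy identity of \Cref{property_B} makes the left-hand side equal to $\|\zeta_{\bm q}\|^2_{\mathcal T_h}$ plus two nonnegative boundary terms, while the right-hand side becomes $(u - u_h, \zeta_y)_{\mathcal T_h}$. Discarding the nonnegative terms and applying Cauchy--Schwarz gives $\|\zeta_{\bm q}\|^2_{\mathcal T_h} \le \|u - u_h\|_{\mathcal T_h}\|\zeta_y\|_{\mathcal T_h}$. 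Both factors on the right are $O(h^{k+1}(|\bm q|_{k+1} + |y|_{k+1} + |\bm p|_{k+1} + |z|_{k+1}))$ by the bound \eqref{eqn:estimate_u_zeta_y} obtained inside the proof of \Cref{thm:estimates_u_y_z}, so taking square roots yields \eqref{err_Lhu_qh}.

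For $\zeta_{\bm p}$, the structure is identical: taking $(\bm r_2, w_2, \mu_2) = (\zeta_{\bm p}, \zeta_z, \zeta_{\widehat z})$ in \eqref{eq_zh_zhu} and using \Cref{property_B} gives $\|\zeta_{\bm p}\|^2_{\mathcal T_h} \le \|\zeta_y\|_{\mathcal T_h}\|\zeta_z\|_{\mathcal T_h}$. The factor $\|\zeta_y\|_{\mathcal T_h}$ is again controlled by \eqref{eqn:estimate_u_zeta_y}. For the remaining factor I would split $\zeta_z = z_h(u) - z_h$ by the triangle inequality as $\|\zeta_z\|_{\mathcal T_h} \le \|z - z_h(u)\|_{\mathcal T_h} + \|z - z_h\|_{\mathcal T_h}$, and bound the two terms by \Cref{lemma:step3_conv_rates} and \eqref{estimate_z}, each of which is $O(h^{k+1})$. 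Multiplying the two $O(h^{k+1})$ factors and taking square roots yields \eqref{err_Lhu_ph}.

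I do not expect a serious obstacle, since this lemma sits at the end of the bootstrapping chain and both energy arguments are immediate once \Cref{property_B} is invoked. The only points requiring attention are that the $\zeta_{\bm p}$ bound is not self-contained --- it borrows the $\zeta_y$ and $\zeta_z$ estimates from Step 6 rather than proving anything new here --- and that the resulting $O(h^{k+1})$ rate for the flux differences is one order sharper than the final flux rate $O(h^k)$. The latter is harmless: in the concluding triangle inequality $\|\bm q - \bm q_h\|_{\mathcal T_h} \le \|\bm q - \bm q_h(u)\|_{\mathcal T_h} + \|\zeta_{\bm q}\|_{\mathcal T_h}$, the dominant $O(h^k)$ contribution comes from the projection error in \Cref{le}, not from $\zeta_{\bm q}$.
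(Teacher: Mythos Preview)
Your proposal is correct and matches the paper's proof essentially line for line: the same energy test functions in \eqref{eq_yh_yhu} and \eqref{eq_zh_zhu}, the same invocation of \Cref{property_B}, the same Cauchy--Schwarz step, and the same triangle-inequality split of $\|\zeta_z\|_{\mathcal T_h}$ via \Cref{lemma:step3_conv_rates} and \eqref{estimate_z}. Your closing remarks about the bootstrapping order and the dominance of the $O(h^k)$ projection error in the final flux bound are also accurate.
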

\begin{proof}
	\Cref{property_B}, the error equation \eqref{eq_yh_yhu}, and the estimate \eqref{eqn:estimate_u_zeta_y} give
	\begin{align*}
	\|\zeta_{\bm q}\|^2_{\mathcal T_h} &\lesssim  \mathscr B(\zeta_{\bm q},\zeta_y,\zeta_{\widehat y};\zeta_{\bm q},\zeta_y,\zeta_{\widehat y})\\
	&=( u- u_h,\zeta_{ y})_{\mathcal T_h}\\
	&\le\| u- u_h\|_{\mathcal T_h}\|\zeta_{ y}\|_{\mathcal T_h}\\
	&\lesssim h^{2k+2}(|\bm q|_{k+1}+|y|_{k+1}+|\bm p|_{k+1}+|z|_{k+1})^2.
	\end{align*}
	Similarly, \Cref{property_B}, the error equation \eqref{eq_zh_zhu}, the estimate \eqref{eqn:estimate_u_zeta_y},  \Cref{lemma:step3_conv_rates}, and  \Cref{thm:estimates_u_y_z} give
	\begin{align*}
	\|\zeta_{\bm p}\|^2_{\mathcal T_h} &\lesssim  \mathscr B(\zeta_{\bm p},\zeta_z,\zeta_{\widehat z};\zeta_{\bm p},\zeta_z,\zeta_{\widehat z})\\
	&=-(\zeta_{ y},\zeta_{ z})_{\mathcal T_h}\\
	&\le\|\zeta_{y}\|_{\mathcal T_h}\|\zeta_{ z}\|_{\mathcal T_h}\\
	&\le\|\zeta_{y}\|_{\mathcal T_h} ( \| z_h(u) - z \|_{\mathcal T_h} + \| z - z_h \|_{\mathcal T_h} )\\
	&\lesssim h^{2k+2}(|\bm q|_{k+1}+|y|_{k+1}+|\bm p|_{k+1}+|z|_{k+1})^2.
	\end{align*}
\end{proof}
The above lemma, the triangle inequality,  \Cref{le}, and  \Cref{lemma:step3_conv_rates} complete the proof of the main result:
\begin{theorem}
	We have
	\begin{subequations}
		\begin{align}
		\|\bm q-\bm q_h\|_{\mathcal T_h}&\lesssim h^{k}(|\bm q|_{k+1}+|y|_{k+1}+|\bm p|_{k+1}+|z|_{k+1}),\label{err_q}\\
		\|\bm p-\bm p_h\|_{\mathcal T_h}&\lesssim h^{k}(|\bm q|_{k+1}+|y|_{k+1}+|\bm p|_{k+1}+|z|_{k+1})\label{err_p}.
		\end{align}
	\end{subequations}
\end{theorem}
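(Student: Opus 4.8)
The plan is to obtain each flux error by interposing the corresponding auxiliary flux and applying the triangle inequality. Concretely, I would write
\[
\|\bm q - \bm q_h\|_{\mathcal T_h} \le \|\bm q - \bm q_h(u)\|_{\mathcal T_h} + \|\bm q_h(u) - \bm q_h\|_{\mathcal T_h} = \|\bm q - \bm q_h(u)\|_{\mathcal T_h} + \|\zeta_{\bm q}\|_{\mathcal T_h},
\]
and the analogous splitting $\|\bm p - \bm p_h\|_{\mathcal T_h} \le \|\bm p - \bm p_h(u)\|_{\mathcal T_h} + \|\zeta_{\bm p}\|_{\mathcal T_h}$ for the dual flux. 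This is the natural decomposition, since the whole analysis has been arranged to first compare the exact solution with the auxiliary problem (Steps~1--5) and then compare the auxiliary problem with the full EDG discretization (Steps~6--7).

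For the first term in each splitting, \Cref{le} already provides $\|\bm q - \bm q_h(u)\|_{\mathcal T_h} \lesssim h^{k}(|\bm q|_{k+1} + |y|_{k+1})$, and \Cref{lemma:step3_conv_rates} provides the matching $O(h^{k})$ bound for $\|\bm p - \bm p_h(u)\|_{\mathcal T_h}$. For the second term in each, the immediately preceding lemma supplies the higher-order estimates \eqref{err_Lhu_qh} and \eqref{err_Lhu_ph}, namely $\|\zeta_{\bm q}\|_{\mathcal T_h}, \|\zeta_{\bm p}\|_{\mathcal T_h} \lesssim h^{k+1}(|\bm q|_{k+1}+|y|_{k+1}+|\bm p|_{k+1}+|z|_{k+1})$. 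Because $h \le 1$, the $O(h^{k+1})$ contributions are absorbed into the $O(h^{k})$ terms, so summing the two pieces yields the stated $O(h^{k})$ rate for both $\|\bm q - \bm q_h\|_{\mathcal T_h}$ and $\|\bm p - \bm p_h\|_{\mathcal T_h}$.

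For this particular statement there is essentially no obstacle: it is pure bookkeeping once the two families of estimates are in hand. The genuine difficulty lies upstream, in establishing the $O(h^{k+1})$ bounds on $\zeta_{\bm q}$ and $\zeta_{\bm p}$, which rest on the energy identity of \Cref{property_B}, the skew-symmetry relation of \Cref{identical_equa} used to couple the state and adjoint error equations \eqref{eq_yh}, and the control estimate \eqref{eqn:estimate_u_zeta_y}. By contrast, assembling those results into the final flux rates requires only the triangle inequality and the observation that $h \le 1$, so I would present this step as a short corollary of the preceding work rather than as an independent argument.
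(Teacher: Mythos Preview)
Your proposal is correct and matches the paper's own argument essentially line for line: the paper states only that the preceding lemma, the triangle inequality, \Cref{le}, and \Cref{lemma:step3_conv_rates} complete the proof, which is precisely the decomposition and collection of estimates you describe. Your remark that $h\le 1$ is used to absorb the $O(h^{k+1})$ contributions into the $O(h^{k})$ bound is exactly the implicit step the paper relies on.
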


\section{Numerical Experiments}
\label{sec:numerics}

Next, we present a numerical example to illustrate our theoretical results.  We consider the distributed control problem for the Poisson equation on a square domain $\Omega = [0,1]\times [0,1] \subset \mathbb{R}^2$ and take $\gamma = 1$.  We set the exact state and dual state to be $ y(x_1,x_2) = \sin(\pi x_1) $ and $ z(x_1,x_2) = \sin(\pi x_1)\sin(\pi x_2)$, and generate the data $f$, $ g $, and $y_d$ from the optimality system \eqref{eq_adeq}.  Numerical results for $ k = 1 $ and $ k = 2 $ for this problem are shown in Table \ref{table_1}--Table \ref{table_2}.  The numerical convergence rates match the theory.

	\begin{table}
		\begin{center}
			\begin{tabular}{|c|c|c|c|c|c|}
				\hline
				$h/\sqrt 2$ &$1/8$& $1/16$&$1/32$ &$1/64$ & $1/128$ \\
				\hline
				$\norm{\bm{q}-\bm{q}_h}_{0,\Omega}$&3.6714e-01   &1.8490e-01   &9.2615e-02   &4.6328e-02   &2.3167e-02 \\
				\hline
				order&-& 0.99& 1.00  &1.00& 1.00\\
				\hline
				$\norm{\bm{p}-\bm{p}_h}_{0,\Omega}$& 3.8422e-01   &1.9228e-01   &9.6161e-02   &4.8083e-02   &2.4042e-02 \\
				\hline
				order&-&  1.00&1.00 &1.00 & 1.00 \\
				\hline
				$\norm{{y}-{y}_h}_{0,\Omega}$&2.4802e-02   &6.3399e-03   &1.5989e-03   &4.0125e-04   &1.0049e-04\\
				\hline
				order&-& 1.97&1.99&2.00 & 2.00 \\
				\hline
				$\norm{{z}-{z}_h}_{0,\Omega}$& 2.8282e-02   &7.0802e-03   &1.7694e-03   &4.4218e-04   &1.1052e-04 \\
				\hline
				order&-& 2.00&2.00&2.00& 2.00 \\
				\hline
			\end{tabular}
		\end{center}
		\caption{ Errors for the state $y$, adjoint state $z$, and the fluxes $\bm q$ and $\bm p$ when $k=1$.}\label{table_1}
	\end{table}

	\begin{table}
		\begin{center}
			\begin{tabular}{|c|c|c|c|c|c|}
				\hline
				$h/\sqrt 2$ &$1/8$& $1/16$&$1/32$ &$1/64$ & $1/128$ \\
				\hline
				$\norm{\bm{q}-\bm{q}_h}_{0,\Omega}$&2.6598e-02   &6.7755e-03   &1.7029e-03   &4.2631e-04   &1.0662e-04 \\
				\hline
				order&-& 1.97& 1.99  &2.00& 2.00\\
				\hline
				$\norm{\bm{p}-\bm{p}_h}_{0,\Omega}$& 2.6694e-02   &7.0861e-03   &1.7999e-03   &4.5178e-04   &1.1306e-04 \\
				\hline
				order&-&  1.91&1.98 &1.99 & 2.00 \\
				\hline
				$\norm{{y}-{y}_h}_{0,\Omega}$&8.3274e-04   &1.0672e-04   &1.3592e-05   &1.7164e-06   &2.1566e-07\\
				\hline
				order&-& 2.96&2.97&2.99 & 3.00 \\
				\hline
				$\norm{{z}-{z}_h}_{0,\Omega}$& 1.4515e-03   &1.9483e-04  & 2.5202e-05   &3.2009e-06   &4.0316e-07\\
				\hline
				order&-& 2.90&2.95&2.98& 2.99\\
				\hline
			\end{tabular}
		\end{center}
		\caption{Errors for the state $y$, adjoint state $z$, and the fluxes $\bm q$ and $\bm p$ when $k=2$.}\label{table_2}
	\end{table}

\section{Conclusions}
We proposed an EDG method to approximate the solution of an optimal distributed control problems for the Poisson equation. We obtained optimal a priori error estimates for the control, state, and dual state, but suboptimal estimates for their fluxes.  As mentioned earlier, EDG has potential for optimal control problems involving convection dominated partial differential equations and fluid flows.  These problems would be interesting to explore in the future.

Also, we used a different EDG error analysis strategy to prove the error estimates in this work.  We are currently investigating another EDG method, and we have used the different analysis approach to prove optimal convergence rates for all variables.  The details will be reported in a future paper.

\bibliographystyle{siamplain}
\bibliography{yangwen_ref_papers}
\end{document}